\documentclass[11pt]{article}
\usepackage[T1]{fontenc}
\usepackage[utf8]{inputenc}
\usepackage{amsmath,amssymb,amsthm}
\usepackage{geometry}
\newcommand{\E}{\mathbb E}
\newcommand{\Bin}{\operatorname{Bin}}
\newcommand{\Z}{\mathbb Z}
\newcommand{\R}{\mathbb R}
\newcommand{\Mad}{\operatorname{MAD}}

\theoremstyle{plain}
\newtheorem{theorem}{Theorem}[section]
\newtheorem{lemma}[theorem]{Lemma}
\newtheorem{proposition}[theorem]{Proposition}
\newtheorem{corollary}[theorem]{Corollary}
\theoremstyle{remark}
\newtheorem{remark}[theorem]{Remark}
\theoremstyle{definition}
\newtheorem{example}[theorem]{Example}

\title{Local binomial expansions with an Appell shift,\\
and the mean absolute deviation of the binomial distribution}
\author{
N.~Elezovi\'c\\
Department of Applied Mathematics,\\
Faculty of Electrical Engineering and Computing,\\
University of Zagreb, 10000 Zagreb, Croatia\\
\texttt{neven.elezovic@fer.hr}
}
\date{\today}

\begin{document}
\maketitle

\begin{abstract}
	We derive complete asymptotic expansions for the binomial mass at a bounded
	lattice displacement and for the mean absolute deviation
	\(\E|X-Np|\), \(X\sim\Bin(N,p)\), with \(0<p<1\).  De~Moivre's exact formula
	reduces the latter problem to the local mass at
	\(\nu=\lceil Np\rceil\), so the coefficients depend on the oscillating
	displacement \(h_N=\lceil Np\rceil-Np\).  We show that the full expansion is
	governed by Bernoulli polynomials evaluated at this displacement; equivalently,
	the lattice correction is an Appell shift in the Stirling series.  The
	calculation is based on a gamma-quotient expansion with unequal linear
	scalings, stated with uniformity in the bounded shift.  In passing from the local
	mass to the mean absolute deviation, the elementary, non-Bernoulli part of the
	shift cancels term by term against the De~Moivre prefactor, leaving coefficients
	that are pure Bernoulli polynomials.  As consequences, the classical first
	correction of Frame and Johnson is embedded in the general coefficient sequence,
	and the Ces\`aro means of the oscillating coefficients are obtained from the
	multiplication theorem for Bernoulli polynomials.  Finally, although an oscillating
	asymptotic expansion does not bound its function by truncation, De~Moivre's identity
	together with Robbins's form of Stirling's formula yields an elementary two-sided
	bound of logarithmic width \(O(N^{-2})\) in the interior; and at an integer mean the
	logarithmic expansion reduces to a sign-alternating series in odd powers of \(N^{-1}\) which
	we prove to be enveloping, via a sign-definite Binet-kernel representation of the combined
	three-gamma Stirling remainder: successive truncations bracket the mean absolute deviation.
\end{abstract}

\noindent\textbf{2020 Mathematics Subject Classification.}
	41A60, 11B68, 60C05, 33B15.

\medskip\noindent\textbf{Keywords.}
	Binomial distribution; mean absolute deviation; asymptotic expansion; Bernoulli polynomials;
	Appell polynomials; gamma function; lattice corrections.

% =====================================================================
\section{Introduction}

	Let $X\sim\Bin(N,p)$ with $0<p<1$ and $q=1-p$.  The quantity
	\begin{equation}\label{eq:mad-def}
		\Mad(N,p):=\E\,|X-Np|
	\end{equation}
	is the mean absolute deviation of $X$ about its mean.  It is one of the oldest explicitly
	summed quantities in probability: De~Moivre showed that the sum collapses to a single term,
	\begin{equation}\label{eq:demoivre}
		\E\,|X-Np| \;=\; 2\,\nu\, q \binom N\nu p^{\nu}q^{N-\nu},
		\qquad \nu:=\lceil Np\rceil .
	\end{equation}
	(For the history of \eqref{eq:demoivre} and of the family of identities it belongs to, see
	Diaconis and Zabell~\cite{diaconis_zabell}; for the mean absolute deviation as a descriptive
	functional of the binomial law, see Johnson, Kemp and Kotz~\cite[\S3.3]{jkk}.)  A short proof is
	recalled in Section~\ref{sec:mad}.
	
	Formula \eqref{eq:demoivre} is exact at every finite $N$, and it is the natural starting point
	for asymptotics: the whole expectation is carried by the local mass of the binomial
	distribution at one lattice point.  The leading behaviour is classical and is the Gaussian mean
	deviation,
	\begin{equation}\label{eq:mad-leading}
		\E\,|X-Np|\;\sim\;\sqrt{\frac{2Npq}{\pi}} .
	\end{equation}
	
	Beyond \eqref{eq:mad-leading} the literature stops at the \emph{first} correction, and then turns
	to bounds.  The first correction is due to Frame~\cite{frame1945}, who states it in the squared
	form
	\begin{equation}\label{eq:frame}
		\frac\pi2\bigl(\E|X-Np|\bigr)^2
		=Npq+\bigl(Np-[Np]\bigr)\bigl(Nq-[Nq]\bigr)-\frac{1-pq}{6}+\frac{E_N}{24N},
		\qquad |E_N|\le1
	\end{equation}
	(asymptotically), the next order being merely bounded and not computed; Johnson~\cite{johnson1957}
	obtains the same correction in linearised form, and remarks that it already gives three-figure
	accuracy for $0.2\le p\le0.8$.  Both authors have the lattice defect in it: for
	$Np\notin\Z$ the product $(Np-[Np])(Nq-[Nq])$ is exactly $h_N(1-h_N)$, with $h_N$ as in
	\eqref{eq:hN} below.  This is more than an anecdote, and we return to it in
	Remark~\ref{rem:frame}: written as a Bernoulli polynomial, $h(1-h)=\tfrac16-B_2(h)$, and
	\eqref{eq:frame} \emph{is} the first coefficient of Theorem~\ref{thm:mad}.  The Bernoulli pattern
	was therefore visible, at one order, in 1945; what was missing was the mechanism that continues it.
	Frisch~\cite{frisch1924} and Jordan~\cite{jordan1927} carry only the exact formula
	\eqref{eq:demoivre}, and Ramasubban~\cite{ramasubban1958}, having noted that Johnson had settled
	the binomial, treats the other discrete laws.
	
	After that the subject becomes one of \emph{bounds} rather than expansions.
	Blyth~\cite{blyth1980} evaluates $D_N(p)=\E|X/N-p|$ exactly, by \eqref{eq:demoivre}, and extracts
	its leading asymptotics --- for instance $\sqrt N\max_pD_N(p)\to(2\pi)^{-1/2}$, by Stirling's
	formula.  Berend and Kontorovich~\cite{berend_kontorovich} give sharp \emph{non-asymptotic}
	bounds on $\E|X-Np|$, valid for all $p\in[0,1]$, and emphasise that the approximation of the mean
	absolute deviation by a multiple of the standard deviation is \emph{not uniform} in $p$ and
	degenerates at the endpoints.  We have not found in the literature any correction beyond the first,
	let alone a full expansion.  Theorem~\ref{thm:mad} supplies one, to all orders; and the
	non-uniformity of \cite{berend_kontorovich} is legible in it, in the factors $(pq)^{-m}$ carried by
	the $m$-th coefficient.
	
	What is not classical is what comes next.  The obstruction is visible in
	\eqref{eq:demoivre}: the mean $Np$ is in general \emph{not} a lattice point, and the formula
	evaluates the mass at the first lattice point above it.  Write
	\begin{equation}\label{eq:hN}
		h_N:=\nu-Np=\lceil Np\rceil-Np\in[0,1).
	\end{equation}
	The displacement $h_N$ is bounded but does not converge; for irrational $p$ it is equidistributed
	in $[0,1)$.  Consequently the correction terms in the expansion of \eqref{eq:demoivre} cannot be
	constants: they must be bounded functions of $h_N$, and the expansion \emph{oscillates}.  This is
	the same phenomenon that produces the Bernoulli-polynomial terms in Edgeworth expansions for
	lattice variables (Esseen~\cite{esseen1945}, Kolassa and McCullagh~\cite{kolassa_mccullagh1990}),
	here in an exactly solvable instance.
	
	The purpose of this note is to compute the expansion completely, and to identify the algebraic
	mechanism behind it.  Our main results are the following two.
	
\begin{itemize}
\item[(A)] \emph{The local binomial mass with a bounded displacement.}  For $h$ ranging over a
	fixed compact interval and $Np+h\in\Z$, the mass
	$\pi_N(h;p)=\binom{N}{Np+h}p^{Np+h}q^{Nq-h}$ has a complete logarithmic
	expansion whose coefficients are built from Bernoulli polynomials
	$B_{k+1}(h)$; see Theorem~\ref{thm:local}.
\item[(B)] \emph{The mean absolute deviation.}  De~Moivre's formula then gives
	a complete expansion of \(\Mad(N,p)\).  Its coefficients are obtained from
	the same Appell-shifted series at the oscillating displacement \(h_N\), after
	the elementary prefactor in \eqref{eq:demoivre} is absorbed; see
	Theorem~\ref{thm:mad}.
\end{itemize}

\medskip\noindent
\textbf{Convention on asymptotic statements.}  All expansions below are \emph{complete}, in the
	following sense.  Writing $f(x)\sim g(x)\sum_{n\ge0}c_nx^{-n}$ means: for every fixed $M$,
	\[
		f(x)=g(x)\Bigl(\sum_{n=0}^{M}c_nx^{-n}+O(x^{-M-1})\Bigr),
	\]
	with the implied constant uniform in all parameters ranging over the compact sets specified in
	the statement.  When an answer is displayed as the exponential of a series, the logarithmic
	series is truncated at a fixed order and the exponential re-expanded to the same order;
	Theorem~\ref{thm:multiplicative} shows that the two forms carry
	the same information.  The word \emph{complete} refers to this control of every fixed
	truncation order, not to convergence of the series: here the coefficients $d_m(h_N;p)$ do
	not even settle down as $N\to\infty$, since they depend on the oscillating $h_N$.
	Uniformity is not a technicality here.  The displacement $h_N$ has no
	limit, so a statement about coefficients that depend on $h_N$ is empty unless the error term is
	uniform in the displacement; this is why the compactness hypotheses are carried through every
	statement below.

\medskip
	The technical device is stated in Section~\ref{sec:unequal}.  In the theory of asymptotic
	expansions of gamma quotients developed in \cite{be1,be3,be4} and applied to binomial
	coefficients in \cite{be-binom}, the gamma factors all carry the \emph{same} large argument
	$x$, and the coefficients are then obtained from a single recursion in the Bernoulli
	polynomials evaluated at the shifts.  Binomial coefficients of the form $\binom{px}{rx}$ are
	brought into that form by the multiplication formula for the gamma function --- a step which
	requires the scalings to be commensurable, and which therefore forces a case distinction
	according to the arithmetic of $p$ and $r$.  Here the three gamma factors carry the arguments
	$N$, $Np$ and $Nq$ with an arbitrary real $p$, so no such equalisation is available.
	Lemma~\ref{lem:unequal} shows that it is not needed: the recursion of \cite{be-binom} survives
	verbatim provided each Bernoulli polynomial is weighted by the $(-n)$-th power of its own
	scaling factor.  This is the natural home of the computation, and it is what makes the Appell
	shift (in the sense of \cite{bev-appell}) visible.  The lemma itself is not new --- its history
	runs from Box~\cite{box1949} through Gupta and Tang~\cite{gupta_tang} to Karp and
	Prilepkina~\cite{karp_prilepkina}, and, for unit scalings, through \cite{be1,be4} to Theorem A of
	\cite{be-binom}.  What we require of it, and what the sources do not state, is the unequal real
	scaling and uniformity in the shift.

% =====================================================================
\section{Gamma quotients with unequal scalings}\label{sec:unequal}

	Throughout, $B_k$ and $B_k(t)$ denote the Bernoulli numbers and polynomials in the standard
	normalisation,
	\[
		\frac{z}{e^{z}-1}=\sum_{k\ge0}B_k\,\frac{z^k}{k!},
		\qquad
		\frac{z\,e^{tz}}{e^{z}-1}=\sum_{k\ge0}B_k(t)\,\frac{z^k}{k!},
	\]
	so that $B_k=B_k(0)$ and, in particular, $B_1=-\tfrac12$.  The sequence $(B_k(t))_{k\ge0}$ is the
	Appell sequence generated by $z/(e^z-1)$; for Appell sequences we refer to Roman~\cite{roman}.
	We shall use the Stirling expansion in its shifted (Appell) form
	\begin{equation}\label{eq:stirling-shift}
		\log\Gamma(x+t)\sim\Bigl(x+t-\tfrac12\Bigr)\log x-x+\tfrac12\log 2\pi
		+\sum_{n\ge1}\frac{(-1)^{n+1}B_{n+1}(t)}{n(n+1)}\,\frac1{x^n},
		\qquad x\to\infty,
	\end{equation}
	uniformly for $t$ in compact sets; see \cite[\S4]{bev-appell} and \cite{luke}.  Formula
	\eqref{eq:stirling-shift} is the log-gamma instance of the Appell principle: at $t=0$ the
	coefficients are the Bernoulli numbers, and shifting the argument by $t$ replaces them by the
	Bernoulli polynomials $B_{n+1}(t)$, which form an Appell sequence.  The general statement --- any
	asymptotic expansion in inverse powers acquires Appell polynomials when the variable is shifted
	--- is \cite[Theorem 3.1]{bev-appell}.

\begin{lemma}[Gamma quotients with unequal scalings, \cite{karp_prilepkina, be4, be-binom}]\label{lem:unequal}
	Let $\rho,\sigma\ge1$, let $\lambda_1,\dots,\lambda_\rho>0$ and $\mu_1,\dots,\mu_\sigma>0$,
	and let $u_1,\dots,u_\rho$, $v_1,\dots,v_\sigma$ be real.  Put
	\[
		F(x)=\frac{\prod_{j=1}^{\rho}\Gamma(\lambda_j x+u_j)}
		          {\prod_{k=1}^{\sigma}\Gamma(\mu_k x+v_k)} .
	\]
	Then, as $x\to\infty$,
	\begin{equation}\label{eq:logF}
		\log F(x)\sim \Lambda\,x\log x+\Theta\,x+U\log x+K
		+\sum_{n\ge1}\frac{(-1)^{n+1}}{n(n+1)}\,S_{n+1}\,\frac{1}{x^{n}},
	\end{equation}
	where
	\begin{align}
		\Lambda&=\sum_j\lambda_j-\sum_k\mu_k, &
		\Theta&=\sum_j\lambda_j\log\lambda_j-\sum_k\mu_k\log\mu_k-\Lambda,
		\label{eq:LambdaTheta}\\
		U&=\sum_j\Bigl(u_j-\tfrac12\Bigr)-\sum_k\Bigl(v_k-\tfrac12\Bigr), &
		K&=\sum_j\Bigl(u_j-\tfrac12\Bigr)\log\lambda_j
		 -\sum_k\Bigl(v_k-\tfrac12\Bigr)\log\mu_k+\frac{\rho-\sigma}{2}\log2\pi,
		\label{eq:UK}
	\end{align}
	and
	\begin{equation}\label{eq:Sn}
		S_{n+1}=\sum_{j=1}^{\rho}\lambda_j^{-n}B_{n+1}(u_j)
		       -\sum_{k=1}^{\sigma}\mu_k^{-n}B_{n+1}(v_k).
	\end{equation}
	More precisely, fix a compact $H\subset\R$ and constants $0<c\le C<\infty$.  Then for every
	$M\ge0$ there are $x_M$ and $C_M$, depending only on $M,H,c,C,\rho,\sigma$, such that for all
	$x\ge x_M$, all shifts $u_j,v_k\in H$ and all scalings $\lambda_j,\mu_k\in[c,C]$,
	\begin{equation}\label{eq:logF-trunc}
		\Bigl|\,
		\log F(x)-\Lambda x\log x-\Theta x-U\log x-K
		-\sum_{n=1}^{M}\frac{(-1)^{n+1}}{n(n+1)}\,S_{n+1}\,\frac1{x^{n}}
		\,\Bigr|\;\le\;\frac{C_M}{x^{M+1}} .
	\end{equation}
\end{lemma}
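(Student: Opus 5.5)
The plan is to apply the shifted Stirling expansion \eqref{eq:stirling-shift} to each gamma factor separately, with large variable $\lambda_j x$ (resp.\ $\mu_k x$) and shift $u_j$ (resp.\ $v_k$), and then sum. For a single factor we write
\[
\log\Gamma(\lambda x+u)=\Bigl(\lambda x+u-\tfrac12\Bigr)\bigl(\log\lambda+\log x\bigr)-\lambda x+\tfrac12\log2\pi+\sum_{n=1}^{M}\frac{(-1)^{n+1}B_{n+1}(u)}{n(n+1)}\,\frac{\lambda^{-n}}{x^{n}}+R_M(\lambda x,u).
\]
Expanding the first product gives $\lambda x\log x+(\lambda\log\lambda-\lambda)x+(u-\tfrac12)\log x+(u-\tfrac12)\log\lambda+\tfrac12\log2\pi$. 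Summing over $j$ with sign $+$ and over $k$ with sign $-$ then produces exactly $\Lambda$, $\Theta$, $U$, $K$ of \eqref{eq:LambdaTheta}--\eqref{eq:UK}. The $\tfrac12\log 2\pi$ terms count to $(\rho-\sigma)/2\cdot\log2\pi$, and the $-\lambda x$ terms combine into the $-\Lambda$ inside $\Theta$. The power terms assemble into $S_{n+1}$ of \eqref{eq:Sn}, because $(\lambda x)^{-n}=\lambda^{-n}x^{-n}$. This is purely bookkeeping, and it proves \eqref{eq:logF} as a formal identity once each single-factor remainder is controlled.

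The only substantive point is the uniform remainder bound \eqref{eq:logF-trunc}. For this I need a quantitative version of \eqref{eq:stirling-shift}. For $y\ge y_M$ and $t\in H$,
\[
|R_M(y,t)|\le C'_M\,y^{-M-1},
\]
with $C'_M$ depending only on $M$ and $H$. I will obtain it as follows. Take a nonnegative integer $m$ with $t+m\ge 1$ for all $t\in H$, which is possible since $H$ is compact. Write $\log\Gamma(y+t)=\log\Gamma(y+t+m)-\sum_{i=0}^{m-1}\log(y+t+i)$. Apply the classical Binet/Stirling remainder bound for $\log\Gamma(z)$ at $z=y+t+m\ge y$. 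Its $M$-th remainder is bounded by $|B_{2r}|/(2r(2r-1))\,z^{1-2r}$, or by the integral form $\int_0^\infty \tilde B_{M+1}(s)/(M+1)\,(z+s)^{-M-1}\,ds$ with the periodic Bernoulli function, and this is uniform for real $z\ge1$. Then re-expand the finitely many $\log(1+(t+i)/y)$ and the $\log(y+t+m)$ powers in $1/y$ by Taylor's theorem with remainder. These Taylor expansions converge uniformly for $y\ge 2\max(|t|+m)$.

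Re-expansion in powers of $1/y$ must reproduce the Bernoulli-polynomial coefficients of \eqref{eq:stirling-shift}, since the asymptotic expansion is unique. All constants in this argument depend continuously on $t$ in the compact set $H$, so they are uniform there.

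Finally, substitute $y=\lambda x$ with $\lambda\in[c,C]$. We have $y^{-M-1}\le c^{-M-1}x^{-M-1}$, and the threshold $y\ge y_M$ holds once $x\ge y_M/c$. Summing $\rho+\sigma$ such remainders gives $C_M=(\rho+\sigma)C'_Mc^{-M-1}$ and $x_M=y_M/c$. Both depend only on $M,H,c,C,\rho,\sigma$, as claimed; the upper bound $C$ is actually not needed here. I expect the main obstacle to be this uniform-in-$t$ remainder estimate for the shifted Stirling series. Quoting \cite{bev-appell,luke} for uniformity on compacts is the first option; if those sources do not state the explicit constant, I will fall back on the shift-by-integer reduction above.
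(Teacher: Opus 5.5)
Your proposal is correct and takes essentially the paper's route: you apply the shifted Stirling series to each factor and do the bookkeeping for $\Lambda,\Theta,U,K,S_{n+1}$, get uniformity from the classical remainder of the unshifted series plus uniform Taylor re-expansion of the elementary factors, and obtain the constants from $\lambda x\ge cx$ (your remark that $C$ is not needed for the lemma itself is also right). The only difference is your preliminary integer shift $t\mapsto t+m$, which is harmless but unnecessary, since $\lambda x+u\ge cx-\sup_H|u|$ is already large and the paper applies the unshifted remainder directly at $w=\lambda_jx+u_j$.
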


\begin{proof}
	Apply \eqref{eq:stirling-shift} to each factor with $x$ replaced by $\lambda_jx$
	(respectively $\mu_kx$) and $t$ by $u_j$ (respectively $v_k$), and use
	$\log(\lambda_jx)=\log\lambda_j+\log x$:
	\[
	\begin{aligned}
		\log\Gamma(\lambda_jx+u_j)\sim{}&
		\lambda_j\,x\log x+\lambda_j(\log\lambda_j-1)\,x
		+\Bigl(u_j-\tfrac12\Bigr)\log x\\
		&{}
		+\Bigl(u_j-\tfrac12\Bigr)\log\lambda_j+\tfrac12\log2\pi
		+\sum_{n\ge1}
		\frac{(-1)^{n+1}B_{n+1}(u_j)}{n(n+1)\lambda_j^{n}}\,\frac1{x^n}.
	\end{aligned}
	\]
	Collecting the terms of each type over the numerator and the denominator gives
	\eqref{eq:logF}--\eqref{eq:Sn}.

	For \eqref{eq:logF-trunc}: since $\lambda_j\ge c$ and the shifts lie in the compact $H$, all
	arguments $\lambda_jx+u_j$ and $\mu_kx+v_k$ are positive once $x\ge x_0(H,c)$, so every
	logarithm above is real; and the large variable of each factor satisfies
	$\lambda_jx\ge cx\to\infty$ uniformly.  The truncated form of \eqref{eq:stirling-shift}, with a
	remainder uniform in the shift, needs no separate source.  Apply the \emph{unshifted} Stirling
	expansion --- whose remainder after $M$ terms is $O(w^{-M-1})$ as $w\to\infty$, by the classical
	error bounds \cite[\S5]{luke},\cite[\S2.1]{paris_kaminski} --- to $\log\Gamma(w)$ at
	$w=\lambda_jx+u_j$; since $\lambda_j\ge c$ and $u_j\in H$ we have $w\ge cx-\sup_H|u|\to\infty$
	uniformly, so that remainder is $O(x^{-M-1})$ uniformly.  It remains to re-expand the elementary
	factors $\log w=\log(\lambda_jx)+\log\bigl(1+u_j/(\lambda_jx)\bigr)$ and
	$w^{1-2n}=(\lambda_jx)^{1-2n}\bigl(1+u_j/(\lambda_jx)\bigr)^{1-2n}$ in powers of $x^{-1}$; for
	$x\ge x_0(H,c)$ one has $|u_j/(\lambda_jx)|\le\tfrac12$, so each is a Taylor expansion with a
	remainder $O(x^{-M-1})$ that is uniform in $u_j\in H$ and $\lambda_j\ge c$.  Collecting the powers
	of $x^{-n}$ carries the Bernoulli numbers into the Bernoulli polynomials $B_{n+1}(u_j)$ --- this is
	the Appell shift \cite[Thm 3.1]{bev-appell} --- and yields \eqref{eq:stirling-shift} in truncated
	form.  The coefficients $S_{n+1}$ of \eqref{eq:Sn} are then bounded on the parameter set, being
	polynomials in the $u_j,v_k$ and in $\lambda_j^{-1},\mu_k^{-1}\le c^{-1}$.  Summing the $\rho+\sigma$
	truncated expansions gives \eqref{eq:logF-trunc}.
\end{proof}

\begin{theorem}[Multiplicative form]\label{thm:multiplicative}
	In the notation of Lemma~\ref{lem:unequal}, suppose $\Lambda=0$.  Then
	\begin{equation}\label{eq:Fmult}
		F(x)\sim e^{K}\,e^{\Theta x}\,x^{U}\sum_{n\ge0}P_n\,x^{-n},
	\end{equation}
	where $P_0=1$ and
	\begin{equation}\label{eq:Prec}
		P_n=\frac1n\sum_{k=1}^{n}\frac{(-1)^{k+1}}{k+1}\,S_{k+1}\,P_{n-k},
		\qquad n\ge1 .
	\end{equation}
	Under the compactness hypotheses of Lemma~\ref{lem:unequal} this holds in the truncated form:
	for every $M$,
	\begin{equation}\label{eq:Fmult-trunc}
		F(x)=e^{K}\,e^{\Theta x}\,x^{U}
		\Bigl(\sum_{n=0}^{M}P_nx^{-n}+O(x^{-M-1})\Bigr),
	\end{equation}
	with the implied constant uniform on the parameter set.
\end{theorem}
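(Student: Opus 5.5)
With $\Lambda=0$ the $x\log x$ term in \eqref{eq:logF} vanishes, so Lemma~\ref{lem:unequal} gives
\[
\log F(x)=K+\Theta x+U\log x+L_M(x)+R_M(x),
\qquad L_M(x):=\sum_{n=1}^{M}a_n x^{-n},
\quad a_n:=\frac{(-1)^{n+1}}{n(n+1)}S_{n+1},
\]
with $|R_M(x)|\le C_M x^{-M-1}$ uniformly on the parameter set. The plan is to exponentiate: $F(x)=e^{K}e^{\Theta x}x^{U}\exp(L_M(x))\exp(R_M(x))$. It then remains to show that $\exp(L_M(x))$ agrees with $\sum_{n\le M}P_nx^{-n}$ up to $O(x^{-M-1})$, and that the factor $\exp(R_M(x))$ costs only another $O(x^{-M-1})$.

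\textbf{Formal part.} Consider the formal series $A(z)=\sum_{n\ge1}a_nz^n$ and $P(z)=\exp A(z)=\sum_{n\ge0}P_nz^n$. Differentiating gives $P'=A'P$. Comparing coefficients of $z^{n-1}$ yields
\[
nP_n=\sum_{k=1}^n k\,a_k\,P_{n-k}.
\]
Since $k\,a_k=\frac{(-1)^{k+1}}{k+1}S_{k+1}$, this is exactly the recursion \eqref{eq:Prec}, and $P_0=e^{0}=1$. This settles the identification of the coefficients; it is the standard exponential-of-a-power-series recursion.

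\textbf{Analytic part (uniformity).} This is the only real obstacle. The coefficients $a_1,\dots,a_M$ are bounded by some $A_M$ uniformly on the parameter set, because the $S_{n+1}$ are polynomials in the shifts and in the inverse scalings, exactly as noted in the proof of Lemma~\ref{lem:unequal}. Hence, for $x\ge1$, $|L_M(x)|\le MA_M/x$ is bounded. By Taylor's theorem with remainder for $\exp$,
\[
\exp(L_M)=\sum_{j=0}^{M}\frac{L_M^j}{j!}+O\bigl(|L_M|^{M+1}\bigr),
\]
and the error term is $O(x^{-M-1})$. Expanding $\sum_{j\le M}L_M^j/j!$ produces a polynomial in $x^{-1}$. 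Its coefficients of $x^{-n}$ for $n\le M$ coincide with $P_n$, because truncating $A$ at order $M$ does not affect the coefficients of $\exp A$ up to order $M$. The remaining monomials have degree at most $M^2$ with uniformly bounded coefficients, so they contribute $O(x^{-M-1})$.

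Next, $|R_M|\le C_M x^{-M-1}\le1$ for $x\ge x_M$, so $\exp(R_M)=1+O(x^{-M-1})$. Also $\sum_{n\le M}P_nx^{-n}$ is uniformly bounded. Multiplying these estimates gives \eqref{eq:Fmult-trunc} with constants depending only on $M,H,c,C,\rho,\sigma$, and \eqref{eq:Fmult} follows since $M$ is arbitrary.
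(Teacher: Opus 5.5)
Your proof is correct and follows the paper's argument essentially step for step. You derive the recursion by differentiating $P=\exp A$, then exponentiate the truncated logarithmic expansion of Lemma~\ref{lem:unequal}, using the uniform boundedness of the coefficients and $\exp\bigl(O(x^{-M-1})\bigr)=1+O(x^{-M-1})$; you spell out the Taylor truncation of $\exp(L_M)$ in more detail than the paper does.
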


\begin{proof}
	By \eqref{eq:logF} with $\Lambda=0$ we have
	\[
		F(x)\sim e^{K}e^{\Theta x}x^{U}
		\exp\Bigl(\sum_{n\ge1}A_nx^{-n}\Bigr),
		\qquad
		A_n=\frac{(-1)^{n+1}S_{n+1}}{n(n+1)} .
	\]
	If $\exp(\sum_{n\ge1}A_ny^{n})=\sum_{n\ge0}P_ny^n$, then
	differentiating and comparing coefficients gives
	$nP_n=\sum_{k=1}^{n}kA_kP_{n-k}$, which is \eqref{eq:Prec}.

	For \eqref{eq:Fmult-trunc}, exponentiate \eqref{eq:logF-trunc}.  The coefficients $A_n$ are
	bounded on the parameter set, so
	\[
		\exp\bigl(\sum_{n\le M}A_nx^{-n}\bigr)=\sum_{n\le M}P_nx^{-n}+O(x^{-M-1})
	\] 
	uniformly, while
	the remainder contributes the factor $\exp\bigl(O(x^{-M-1})\bigr)=1+O(x^{-M-1})$.  Multiplying
	the two gives an error of the same order.
\end{proof}

% =====================================================================
\section{The local binomial mass}\label{sec:local}

Fix $0<p<1$, $q=1-p$.  For a real displacement $h$ such that $Np+h\in\{0,1,\dots,N\}$, define the
local binomial mass
\begin{equation}\label{eq:pi-def}
	\pi_N(h;p):=\binom{N}{Np+h}\,p^{Np+h}\,q^{Nq-h}
	=\Pr\{X=Np+h\},\qquad X\sim\Bin(N,p).
\end{equation}
The probabilities are kept inside: only then does the exponential (entropy) factor cancel and a
genuine local expansion appear.

\begin{theorem}[Local binomial mass]\label{thm:local}
	Fix $0<p<1$ and let $H\subset\R$ be compact.  For $h\in H$ such that
	$Np+h\in\{0,1,\dots,N\}$, as $N\to\infty$,
	\begin{equation}\label{eq:local-log}
		\pi_N(h;p)\sim\frac1{\sqrt{2\pi Npq}}
		\exp\Bigl(\sum_{k\ge1}\frac{A_k(h;p)}{N^k}\Bigr),
	\end{equation}
	where
	\begin{equation}\label{eq:Ak}
		A_k(h;p)=\frac{(-1)^{k+1}}{k(k+1)}
		\Bigl[B_{k+1}-\bigl(p^{-k}+(-1)^{k+1}q^{-k}\bigr)B_{k+1}(h)\Bigr]
		+\frac{(-1)^{k}h^{k}}{k\,p^{k}} .
	\end{equation}
	Equivalently, in multiplicative form,
	\begin{equation}\label{eq:local-mult}
		\pi_N(h;p)\sim\frac1{\sqrt{2\pi Npq}}\sum_{m\ge0}\frac{c_m(h;p)}{N^m},
		\qquad c_0=1,\quad
		c_m=\frac1m\sum_{k=1}^{m}k\,A_k(h;p)\,c_{m-k}.
	\end{equation}
	For every $M$ the remainder after $M$ terms is $O(N^{-M-1})$ relative to the leading factor,
	uniformly for such $h\in H$, with an implied constant depending on $M$, $p$ and $H$.  If
	$p$ is restricted to a compact subinterval of $(0,1)$, the constants may be chosen uniformly in
	$p$.
\end{theorem}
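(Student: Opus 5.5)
Write the mass as a gamma quotient and apply Lemma~\ref{lem:unequal} with $x=N$. Since $Np+h$ and $Nq-h=N-(Np+h)$ are nonnegative integers,
\[
\pi_N(h;p)=\frac{\Gamma(N+1)}{\Gamma(pN+1+h)\,\Gamma(qN+1-h)}\;p^{Np+h}q^{Nq-h}.
\]
This is $F(N)$ with $\rho=1$, $\lambda_1=1$, $u_1=1$, and $\sigma=2$, with $(\mu_1,v_1)=(p,1+h)$ and $(\mu_2,v_2)=(q,1-h)$. The scalings lie in $[\min(p,q),1]$, and the shifts lie in the compact set $\{1\}\cup(1+H)\cup(1-H)$. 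So \eqref{eq:logF-trunc} holds uniformly in $h\in H$. If $p$ ranges over a compact subinterval of $(0,1)$, then $\min(p,q)$ is bounded below, which gives the uniformity in $p$.

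\textbf{Step 1: the non-oscillating terms.} From \eqref{eq:LambdaTheta}--\eqref{eq:UK}:
\begin{itemize}
\item $\Lambda=1-p-q=0$;
\item $\Theta=-p\log p-q\log q$;
\item $U=\tfrac12-(h+\tfrac12)-(\tfrac12-h)=-\tfrac12$;
\item $K=-(h+\tfrac12)\log p-(\tfrac12-h)\log q-\tfrac12\log2\pi$.
\end{itemize}
The log of the probability factor is $Np\log p+Nq\log q+h\log(p/q)$. It cancels $\Theta N$ exactly, and it cancels the $h$-dependent part of $K$. What remains is $-\tfrac12\log N-\tfrac12\log(2\pi pq)$, which is the prefactor $(2\pi Npq)^{-1/2}$.

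\textbf{Step 2: the coefficients.} By \eqref{eq:Sn},
\[
S_{k+1}=B_{k+1}(1)-p^{-k}B_{k+1}(1+h)-q^{-k}B_{k+1}(1-h).
\]
I will rewrite each term with three standard Bernoulli identities:
\begin{itemize}
\item $B_{k+1}(1)=B_{k+1}$, valid since $k+1\ge2$;
\item the reflection $B_{k+1}(1-h)=(-1)^{k+1}B_{k+1}(h)$;
\item the difference equation $B_{k+1}(1+h)=B_{k+1}(h)+(k+1)h^k$.
\end{itemize}
These give
\[
S_{k+1}=B_{k+1}-\bigl(p^{-k}+(-1)^{k+1}q^{-k}\bigr)B_{k+1}(h)-(k+1)h^kp^{-k}.
\]
Multiplying by $(-1)^{k+1}/(k(k+1))$, the last term becomes $(-1)^kh^k/(kp^k)$. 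This reproduces \eqref{eq:Ak} and establishes \eqref{eq:local-log} in the truncated sense, uniformly.

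\textbf{Step 3: the multiplicative form.} Since $\Lambda=0$, Theorem~\ref{thm:multiplicative} applies directly. Its recursion, $nP_n=\sum_k kA_kP_{n-k}$ as derived in its proof, is \eqref{eq:local-mult} with $c_m=P_m$. The coefficients $A_k$ are polynomials in $h$ and in $p^{-1},q^{-1}$, so they are bounded on the parameter set, and the truncation error stays uniform.

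\textbf{Main obstacle.} There is no analytic difficulty beyond the lemma. The only delicate point is bookkeeping: the shift $1+h$ rather than $h$ in the $p$-factor produces the extra non-Bernoulli term $h^k/p^k$, and the reflection must carry the correct sign onto the $q$-term. I will check the result at $k=1$ against the known first correction as a test.
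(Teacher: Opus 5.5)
Your proposal is correct and follows the paper's proof essentially step for step. You use the same gamma-quotient reading with $(\lambda_1,u_1)=(1,1)$, $(\mu_1,v_1)=(p,1+h)$, $(\mu_2,v_2)=(q,1-h)$, the same cancellation of $e^{\Theta N}$ and of the $h$-part of $K$ against $p^{Np+h}q^{Nq-h}$, and the same difference and reflection identities producing the residue $(-1)^kh^k/(kp^k)$. Your explicit check that the scalings lie in $[\min(p,q),1]$ and the shifts in $\{1\}\cup(1+H)\cup(1-H)$ also spells out the uniformity in $h$ and $p$ that the paper leaves implicit in its appeal to Lemma~\ref{lem:unequal}.
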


\begin{proof}
	Write the mass as a gamma quotient,
	\[
		\pi_N(h;p)=\frac{\Gamma(N+1)}{\Gamma(Np+h+1)\,\Gamma(Nq-h+1)}\;p^{Np+h}q^{Nq-h},
	\]
	and read the three factors in the format of Lemma~\ref{lem:unequal} with $x=N$:
	\[
		\Gamma(N+1)=\Gamma(1\cdot N+1),\qquad
		\Gamma(Np+h+1)=\Gamma\bigl(p\cdot N+(h+1)\bigr),\qquad
		\Gamma(Nq-h+1)=\Gamma\bigl(q\cdot N+(1-h)\bigr).
	\]
	Thus $\rho=1$, $\sigma=2$, and
	\[
		\lambda_1=1,\ u_1=1;\qquad \mu_1=p,\ v_1=h+1;\qquad \mu_2=q,\ v_2=1-h .
	\]
	Since $p+q=1$ we have $\Lambda=1-p-q=0$, so Theorem~\ref{thm:multiplicative} applies.  The
	elementary constants are
	\[
		\Theta=-\,(p\log p+q\log q),\qquad
		U=\tfrac12-\bigl(h+\tfrac12\bigr)-\bigl(\tfrac12-h\bigr)=-\tfrac12,
	\]
	\[
		K=-\Bigl(h+\tfrac12\Bigr)\log p-\Bigl(\tfrac12-h\Bigr)\log q-\tfrac12\log2\pi .
	\]
	Now multiply by $p^{Np+h}q^{Nq-h}$.  The factor $e^{\Theta N}$ cancels $p^{Np}q^{Nq}$
	identically, and
	\[
		e^{K}\,p^{h}q^{-h}
		=p^{-h-\frac12}q^{h-\frac12}(2\pi)^{-\frac12}\cdot p^{h}q^{-h}
		=\frac{1}{\sqrt{2\pi pq}},
	\]
	so that together with $x^{U}=N^{-1/2}$ we obtain the prefactor $(2\pi Npq)^{-1/2}$.  The
	series part is \eqref{eq:logF}: with $S_{k+1}=B_{k+1}(1)-p^{-k}B_{k+1}(h+1)-q^{-k}B_{k+1}(1-h)$,
	the coefficient of $N^{-k}$ in the logarithm is
	\[
		A_k=\frac{(-1)^{k+1}}{k(k+1)}
		\Bigl[B_{k+1}(1)-p^{-k}B_{k+1}(h+1)-q^{-k}B_{k+1}(1-h)\Bigr].
	\]
	For $k\ge1$ we have $k+1\ge2$, hence
	$B_{k+1}(1)=B_{k+1}$.  The Appell difference identity gives
	$B_{k+1}(h+1)=B_{k+1}(h)+(k+1)h^{k}$, and the reflection identity gives
	$B_{k+1}(1-h)=(-1)^{k+1}B_{k+1}(h)$.  Therefore
	\[
		S_{k+1}=B_{k+1}-\bigl(p^{-k}+(-1)^{k+1}q^{-k}\bigr)B_{k+1}(h)-(k+1)\,h^{k}p^{-k},
	\]
	and multiplying by $(-1)^{k+1}/(k(k+1))$ yields \eqref{eq:Ak}; the last term is
	\[
		\frac{(-1)^{k+1}\bigl(-(k+1)h^kp^{-k}\bigr)}{k(k+1)}
		=\frac{(-1)^k h^k}{k\,p^k}.
	\]
	This proves \eqref{eq:local-log}, and \eqref{eq:local-mult} follows as in
	Theorem~\ref{thm:multiplicative}.
\end{proof}

\begin{remark}
	The uniformity is essential.  For a fixed pair $(p,h)$ the set of integers
	$N$ with $Np+h\in\Z$ may be finite or empty --- for irrational $p$ it typically is --- so the
	statement has content only as a uniform assertion in $h$.  This is why the
	theorem is stated for $h$ in a compact set, and it is what permits the
	substitution of an $N$-dependent displacement such as $h_N$ in Section~\ref{sec:mad}.
\end{remark}

\begin{remark}[Where the Bernoulli polynomials come from, and why they stay]\label{rem:stay}
	The two identities used in the last step are precisely the two structural properties of an
	Appell sequence: the difference identity encodes $B_m'=mB_{m-1}$, and the reflection identity
	encodes the symmetry of the generating function $x/(e^x-1)$ (see
	\cite[Lemma 2.3]{bev-appell}).  They reduce the three shifts $1$, $h+1$, $1-h$ to a single
	Bernoulli polynomial $B_{k+1}(h)$ together with an elementary power $h^k$, but they cannot
	remove $B_{k+1}(h)$ itself: for a general real $h$ there is no further identity available.
	The displacement is therefore an irreducible parameter of the answer.
\end{remark}

\begin{remark}[Relation with the local limit theorems]\label{rem:llt}
	The local mass of the binomial has of course been expanded before, and it is worth saying
	exactly what Theorem~\ref{thm:local} adds.  Prokhorov~\cite{prokhorov1953} gives the local
	limit theorem for the binomial with an $O(N^{-1})$ error; the general theory of
	\emph{complete} local expansions for sums of lattice vectors is due to Esseen~\cite{esseen1945}
	and, in its modern form, to Bhattacharya and Ranga Rao~\cite[Thm 22.1]{bhattacharya_rao} (see
	also Petrov~\cite[Ch.~VII]{petrov}).  Closest to the present statement is
	Ouimet~\cite{ouimet2021}, whose Theorem 2.1 expands the multinomial mass --- the binomial being
	the case $d=1$ --- explicitly through order $N^{-1}$, uniformly in the bulk, and by the same
	elementary route: Stirling's formula, followed by Taylor expansion.  That $O(N^{-1})$
	multinomial expansion is itself older, going back to Arenbaev~\cite{arenbaev1976} and, in
	symmetrised form, to Siotani and Fujikoshi~\cite{siotani_fujikoshi1984}, both obtained by
	Fourier inversion; Ouimet reproduces it by the elementary route.

	These results are not in competition with Theorem~\ref{thm:local}; they are in a different
	gauge.  The classical lattice expansions are organised in powers of $N^{-1/2}$, with
	coefficients polynomial in the standardised point $x=(k-Np)/\sqrt{Npq}$.  Setting $x=h/\sqrt{Npq}$
	with $h$ bounded, and re-collecting, will indeed produce an expansion in integer powers of
	$N^{-1}$ with coefficients polynomial in $h$; in that sense Theorem~\ref{thm:local} is
	\emph{derivable} from the classical machinery, and we do not claim otherwise.  What that route
	does not produce is the \emph{closed form} of the coefficients.  The polynomials in $h$ come out
	of it one order at a time, as the by-product of a resummation, whereas here they are Bernoulli
	polynomials with the explicit weights $p^{-k}+(-1)^{k+1}q^{-k}$, generated by a single
	recursion, to all orders at once, and by exact algebra rather than Fourier inversion.  That is
	the whole content of the theorem, and it is what makes Theorem~\ref{thm:mad} possible.
\end{remark}

The first two coefficients are, explicitly,
\[
	A_1(h;p)=\frac1{12}-\frac{B_2(h)}{2pq}-\frac hp,
	\qquad
	A_2(h;p)=\frac{q-p}{6p^2q^2}\,B_3(h)+\frac{h^2}{2p^2},
\]
where we used $p^{-1}+q^{-1}=(pq)^{-1}$ and $p^{-2}-q^{-2}=(q-p)(pq)^{-2}$.

% =====================================================================
\section{The mean absolute deviation}\label{sec:mad}

We first recall De~Moivre's identity, in the short form we need.

\begin{proposition}[De~Moivre]\label{prop:demoivre}
	Let $X\sim\Bin(N,p)$, $q=1-p$, $\nu=\lceil Np\rceil$.  Then \eqref{eq:demoivre} holds:
	$\E|X-Np|=2\nu q\,b(\nu;N,p)$, where $b(k;N,p)=\binom Nkp^kq^{N-k}$.  When $Np\in\Z$ the
	choices $\nu=Np$ and $\nu=Np+1$ give the same value.
\end{proposition}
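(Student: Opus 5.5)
The plan is to split the sum at the mean and telescope each half with an elementary identity for the binomial terms. Write $b(k)=b(k;N,p)$. Since $\E(X-Np)=0$,
\[
	\E|X-Np| = 2\sum_{k\ge\nu}(k-Np)\,b(k).
\]
Here $\nu=\lceil Np\rceil$ is the first index with $k-Np\ge0$. The terms with $k<\nu$ contribute negative values of $k-Np$, and by the zero mean their total equals minus the tail sum. So it suffices to show that
\[
	\sum_{k\ge m}(k-Np)\,b(k)=m\,q\,b(m)
\]
for every integer $0\le m\le N$, and then to take $m=\nu$.

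The key step is a telescoping identity. Put $t_k:=k\,q\,b(k)$, with $t_{N+1}:=0$. We use the ratio $b(k+1)/b(k)=\frac{(N-k)p}{(k+1)q}$, which gives
\[
	(k+1)\,q\,b(k+1)=(N-k)\,p\,b(k),
\]
and this holds at $k=N$ as well, where both sides vanish. Then
\[
	t_k-t_{k+1}=kq\,b(k)-(N-k)p\,b(k)=(k-Np)\,b(k),
\]
because $kq+kp=k$. Summing from $k=m$ to $N$ telescopes to $t_m-t_{N+1}=mq\,b(m)$, which is the claim. The same identity with $m=0$ gives $\sum_k(k-Np)b(k)=0$. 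So the zero-mean fact is recovered inside the argument and need not be quoted.

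With $m=\nu$ we get $\E|X-Np|=2\nu q\,b(\nu)$. The only care needed is in identifying the tail at which to split. The terms with $k<\nu$ must have $k-Np<0$, and those with $k\ge\nu$ must have $k-Np\ge0$. This holds for $\nu=\lceil Np\rceil$ because $\lceil Np\rceil-1<Np$.

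For the case $Np\in\Z$, the term $k=Np$ has $k-Np=0$, so $\sum_{k\ge Np}=\sum_{k\ge Np+1}$. The identity above then gives
\[
	Np\,q\,b(Np)=(Np+1)\,q\,b(Np+1).
\]
This can also be checked directly from the ratio identity at $k=Np$: there $(N-k)p=Nqp=kq$. Hence both choices of $\nu$ give the same value. There is no real obstacle in this proof. The only points to watch are the boundary conventions: $t_{N+1}=0$, and the edge cases $\nu=0$ (when $Np=0$, which is excluded since $p>0$) and $\nu=N$, which the telescoping handles automatically.
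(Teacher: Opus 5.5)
Your proof is correct, and it takes a genuinely different route from the paper's. The paper works with the lower tail $\E(Np-X)^{+}=\sum_{k\le\lfloor Np\rfloor}(Np-k)\,b(k;N,p)$. It uses the absorption identity $k\,b(k;N,p)=Np\,b(k-1;N-1,p)$ to pass to $N-1$ trials. It then collapses the resulting difference of distribution functions with the one-trial recursion $B(m;N,p)=q\,B(m;N-1,p)+p\,B(m-1;N-1,p)$, which gives $\E|X-Np|=2Npq\,b(\lfloor Np\rfloor;N-1,p)$. To return to $2\nu q\,b(\nu;N,p)$ it needs absorption once more, plus a separate ratio computation when $Np\in\Z$. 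You never leave $N$ trials. The single consecutive-ratio relation $(k+1)q\,b(k+1)=(N-k)p\,b(k)$ makes $(k-Np)\,b(k)$ an exact difference of $t_k=kq\,b(k)$, so one summation by parts gives $\sum_{k\ge m}(k-Np)\,b(k)=mq\,b(m)$ at every threshold $m$. The zero-mean fact is the case $m=0$. The coincidence of the choices $\nu=Np$ and $\nu=Np+1$ is immediate, because the $k=Np$ term vanishes, with no extra computation. What the paper's detour buys is the intermediate form $2Npq\,b(\lfloor Np\rfloor;N-1,p)$. That form displays the variance $Npq$ explicitly and is indexed by the unambiguous $\lfloor Np\rfloor$, so it treats integer and non-integer means uniformly. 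Your version is shorter and self-contained. Its telescoping, a discrete Stein-type identity, also carries over almost verbatim to other lattice laws whose consecutive-mass ratio is rational in $k$, such as the Poisson and negative binomial.
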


\begin{proof}
	Since $\E(X-Np)=0$ we have $\E|X-Np|=2\,\E(Np-X)^{+}$.  With $m=\lfloor Np\rfloor$ and
	$B(\ell;\kappa,p)=\Pr\{\Bin(\kappa,p)\le\ell\}$, the stop-loss computation
	\[
		\E(Np-X)^{+}=\sum_{k=0}^{m}(Np-k)b(k;N,p)
		=Np\,B(m;N,p)-Np\,B(m-1;N-1,p)
	\]
	uses $k\binom Nk p^kq^{N-k}=Np\,b(k-1;N-1,p)$.  The one-trial recursion
	$B(m;N,p)=q\,B(m;N-1,p)+p\,B(m-1;N-1,p)$ turns the bracket into $q\,b(m;N-1,p)$, so
	$\E|X-Np|=2Npq\,b(m;N-1,p)$.  Finally $Np\,b(m;N-1,p)=\nu\,b(\nu;N,p)$: for $Np\notin\Z$ we
	have $\nu=m+1$ and this is the absorption identity $\nu\binom N\nu=N\binom{N-1}{\nu-1}$ read
	backwards, while for
	$Np=m\in\Z$ both sides equal $Np\,b(m;N-1,p)$, and then
	$b(m+1;N,p)/b(m;N,p)=\frac{(N-m)p}{(m+1)q}=\frac{Np}{Np+1}$ shows that
	$\nu=m$ and $\nu=m+1$ give the same product $\nu\,b(\nu;N,p)$.
\end{proof}

In the notation of Section~\ref{sec:local}, \eqref{eq:demoivre} says
\begin{equation}\label{eq:mad-as-local}
	\E\,|X-Np| = 2\,(Np+h_N)\,q\;\pi_N(h_N;p),
	\qquad h_N=\lceil Np\rceil-Np .
\end{equation}
The prefactor $2(Np+h_N)q=2Npq\,(1+h_N/(Np))$ contributes
$2Npq\cdot(2\pi Npq)^{-1/2}=\sqrt{2Npq/\pi}$ to the leading order --- which is
\eqref{eq:mad-leading} --- and, in the logarithm, the series
\begin{equation}\label{eq:log-prefactor}
	\log\Bigl(1+\frac{h}{Np}\Bigr)=\sum_{k\ge1}\frac{(-1)^{k+1}}{k}\,\frac{h^k}{p^kN^k}.
\end{equation}
Comparing \eqref{eq:log-prefactor} with the last term of \eqref{eq:Ak}, we see that the two
cancel term by term, for every $k$.  This is the structural point of the note, and it deserves
to be stated as such.

\begin{theorem}[Mean absolute deviation of the binomial distribution]\label{thm:mad}
	Let $X\sim\Bin(N,p)$, $0<p<1$, $q=1-p$, and $h_N=\lceil Np\rceil-Np\in[0,1)$, with the
	convention $h_N=0$ when $Np\in\Z$ (by Proposition~\ref{prop:demoivre} the alternative choice
	$\nu=Np+1$, that is $h=1$, gives the same value; see also Section~\ref{sec:average}).  Then,
	as $N\to\infty$,
	\begin{equation}\label{eq:mad-log}
		\E\,|X-Np|\;\sim\;\sqrt{\frac{2Npq}{\pi}}\;
		\exp\Bigl(\sum_{k\ge1}\frac{a_k(h_N;p)}{N^k}\Bigr),
	\end{equation}
	where the coefficients contain \emph{no} elementary part:
	\begin{equation}\label{eq:ak}
		a_k(h;p)=\frac{(-1)^{k+1}}{k(k+1)}
		\Bigl[B_{k+1}-\bigl(p^{-k}+(-1)^{k+1}q^{-k}\bigr)B_{k+1}(h)\Bigr].
	\end{equation}
	Separating the parities, $a_k$ takes the form
	\begin{align}
		a_{2j-1}(h;p)&=\frac{1}{2j(2j-1)}
			\Bigl[B_{2j}-\bigl(p^{1-2j}+q^{1-2j}\bigr)B_{2j}(h)\Bigr],
			\label{eq:ak-odd}\\[2pt]
		a_{2j}(h;p)&=\frac{p^{-2j}-q^{-2j}}{2j(2j+1)}\;B_{2j+1}(h).
			\label{eq:ak-even}
	\end{align}
	Equivalently,
	\begin{equation}\label{eq:mad-mult}
		\E\,|X-Np|\;\sim\;\sqrt{\frac{2Npq}{\pi}}\sum_{m\ge0}\frac{d_m(h_N;p)}{N^m},
		\qquad d_0=1,\quad
		d_m=\frac1m\sum_{k=1}^{m}k\,a_k(h;p)\,d_{m-k}.
	\end{equation}
	More precisely, for every $M$ and every compact $K\subset(0,1)$,
	\begin{equation}\label{eq:mad-trunc}
		\E\,|X-Np|=\sqrt{\frac{2Npq}{\pi}}\,
		\Bigl(\sum_{m=0}^{M}\frac{d_m(h_N;p)}{N^{m}}+O(N^{-M-1})\Bigr),
	\end{equation}
	with an implied constant depending only on $M$ and $K$, uniformly for $p\in K$.  The remainder
	in \eqref{eq:mad-trunc} is \emph{relative} to the leading factor $\sqrt{2Npq/\pi}$; in
	absolute terms it is $O(N^{-M-\frac12})$.  In particular the expansion contains only integer
	powers of $N^{-1}$ relative to the leading term; no half-integer orders occur at any depth.
\end{theorem}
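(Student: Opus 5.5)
We combine De~Moivre's identity in the form \eqref{eq:mad-as-local} with Theorem~\ref{thm:local}, applied on the compact set $H=[0,1]$. This is legitimate because $h_N\in[0,1)$ and $Np+h_N=\lceil Np\rceil\in\{0,\dots,N\}$; under the convention $h_N=0$ for $Np\in\Z$, Proposition~\ref{prop:demoivre} confirms that the identity holds with this choice. Theorem~\ref{thm:local} holds uniformly in $h\in H$ and in $p\in K$, so we may substitute the $N$-dependent displacement $h=h_N$. For every $M$ this gives
\[
\log\pi_N(h_N;p)=-\tfrac12\log(2\pi Npq)+\sum_{k=1}^{M}\frac{A_k(h_N;p)}{N^k}+O(N^{-M-1}),
\]
uniformly. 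Strictly, Theorem~\ref{thm:local} is stated multiplicatively with relative error. To pass to this logarithmic truncation, either take the logarithm of the multiplicative form, or go directly through \eqref{eq:logF-trunc} of Lemma~\ref{lem:unequal} with scalings $1,p,q\in[\min(K\cup(1-K)),1]$ and shifts in $[0,2]$. The latter route already gives the uniform logarithmic remainder.

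Next we take the logarithm of the prefactor: $\log\bigl(2(Np+h_N)q\bigr)=\log(2Npq)+\log(1+h_N/(Np))$. Because $0\le h_N/(Np)\le 1/(N\min K)\le\tfrac12$ for large $N$, the series \eqref{eq:log-prefactor} truncated at order $M$ has remainder $O(N^{-M-1})$, uniformly in $p\in K$. We add the two expansions. The leading parts combine as $\log(2Npq)-\tfrac12\log(2\pi Npq)=\log\sqrt{2Npq/\pi}$. At order $N^{-k}$ the coefficient is $A_k(h_N;p)+(-1)^{k+1}h_N^k/(kp^k)$, and by \eqref{eq:Ak} the elementary term $(-1)^kh^k/(kp^k)$ cancels exactly. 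What remains is $a_k$ as in \eqref{eq:ak}, which establishes \eqref{eq:mad-log} in truncated logarithmic form with a uniform remainder.

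The parity forms are direct substitution. For $k=2j-1$ we have $(-1)^{k+1}=1$ and $p^{-k}+q^{-k}$, which gives \eqref{eq:ak-odd}. For $k=2j$ the bracket becomes $B_{2j+1}-(p^{-2j}-q^{-2j})B_{2j+1}(h)$. Since $B_{2j+1}=0$ for $j\ge1$, the sign $(-1)^{2j+1}=-1$ yields \eqref{eq:ak-even}.

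For the multiplicative form \eqref{eq:mad-mult}--\eqref{eq:mad-trunc} we argue as in the proof of Theorem~\ref{thm:multiplicative}. The $a_k(h;p)$ are polynomials in $h\in[0,1]$ and $p^{-1},q^{-1}$, so they are bounded on $[0,1]\times K$. Hence $\exp\bigl(\sum_{k\le M}a_kN^{-k}\bigr)=\sum_{m\le M}d_mN^{-m}+O(N^{-M-1})$ uniformly, where the $d_m$ are given by the stated recursion obtained by differentiating the generating function. The exponentiated remainder contributes $1+O(N^{-M-1})$. The absolute error $O(N^{-M-1/2})$ follows by multiplying by the leading factor, and since only integer powers $N^{-k}$ appear in the logarithm, no half-integer orders can arise.

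The only real obstacle is uniformity. The constants must depend on $M$ and $K$ alone, not on the particular value $h_N$, since $h_N$ may be equidistributed and never repeat. This is why we invoke the uniform truncated estimates of Lemma~\ref{lem:unequal} and Theorem~\ref{thm:local} rather than pointwise asymptotics. We also need the elementary check that the arguments $Np+h_N+1$ and $Nq-h_N+1$ stay $\ge cN$ uniformly.
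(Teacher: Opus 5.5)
Your proposal is correct and follows the paper's proof essentially step for step. You use De~Moivre's identity in the form \eqref{eq:mad-as-local}, apply Theorem~\ref{thm:local} on $H=[0,1]$ uniformly in $h$ and $p\in K$, cancel $(-1)^kh^k/(kp^k)$ term by term against the Taylor series of $\log(1+h_N/(Np))$, and exponentiate as in Theorem~\ref{thm:multiplicative}. Your observation that the uniform logarithmic truncation is read off directly from \eqref{eq:logF-trunc} (scalings in $[c,1]$, shifts in $[0,2]$) makes explicit a step the paper leaves implicit.
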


\begin{proof}
	By \eqref{eq:mad-as-local} and Theorem~\ref{thm:local},
	\[
		\log\E|X-Np|
		=\log\sqrt{\frac{2Npq}{\pi}}
		 +\log\Bigl(1+\frac{h_N}{Np}\Bigr)
		 +\sum_{k\ge1}\frac{A_k(h_N;p)}{N^k}.
	\]
	Insert \eqref{eq:Ak} and \eqref{eq:log-prefactor}: the term $(-1)^{k}h^k/(kp^k)$ of $A_k$ and
	the term $(-1)^{k+1}h^k/(kp^k)$ of the logarithm cancel, for every $k\ge1$, leaving
	\eqref{eq:ak}.  For \eqref{eq:ak-odd}--\eqref{eq:ak-even}, note that for even $k=2j$ we have
	$B_{k+1}=B_{2j+1}=0$ and $(-1)^{k+1}=-1$, while for odd $k=2j-1$ we have $(-1)^{k+1}=+1$.
	The multiplicative form and the recursion in \eqref{eq:mad-mult} follow as before.

	For the uniform statement \eqref{eq:mad-trunc}: apply Theorem~\ref{thm:local} with $H=[0,1]$,
	which is legitimate because $h_N\in[0,1)$ and $Np+h_N=\lceil Np\rceil$ for every $N$.
	The Taylor expansion \eqref{eq:log-prefactor} of $\log(1+h/(Np))$ also holds with a uniform $O(N^{-M-1})$
	remainder for $h\in[0,1]$ and $p\in K$, since then $h/(Np)\le 1/(N\min K)\to0$ uniformly.
	Adding the two truncated logarithms and exponentiating, as in
	Theorem~\ref{thm:multiplicative}, gives \eqref{eq:mad-trunc}.  Uniformity is what makes the
	statement meaningful at all: $h_N$ does not converge, so an expansion whose coefficients
	depend on $h_N$ is legitimate only if the error bounds do not.
\end{proof}

The first three \emph{logarithmic} coefficients are much simpler than the multiplicative ones,
and it is worth seeing them before the algebra of exponentiation begins:
\begin{equation}\label{eq:a123}
	a_1=\frac1{12}-\frac{B_2(h)}{2pq},
	\qquad
	a_2=\frac{q-p}{6\,p^2q^2}\,B_3(h),
	\qquad
	a_3=-\frac1{360}-\frac{1-3pq}{12\,p^3q^3}\,B_4(h),
\end{equation}
where we used $p^{-1}+q^{-1}=(pq)^{-1}$, $p^{-2}-q^{-2}=(q-p)(pq)^{-2}$ and
$p^{-3}+q^{-3}=(1-3pq)(pq)^{-3}$, all consequences of $p+q=1$.  The logarithm is where the
structure lives; exponentiation only mixes it.

\begin{remark}[The first coefficient is Frame's]\label{rem:frame}
	The coefficient $a_1$ of \eqref{eq:a123} is not new: it is the correction term of
	Frame~\cite{frame1945} and Johnson~\cite{johnson1957}, in disguise.  Solving Frame's
	\eqref{eq:frame} for $\E|X-Np|$ gives
	\[
		\E|X-Np|=\sqrt{\frac{2Npq}{\pi}}
		\Bigl(1+\frac{1}{N}\cdot\frac{h(1-h)-\tfrac16(1-pq)}{2pq}+O(N^{-2})\Bigr),
	\]
	and since $B_2(h)=h^2-h+\tfrac16$, that is
	\[
		\frac{h(1-h)-\tfrac16(1-pq)}{2pq}
		=\frac{\tfrac16-B_2(h)-\tfrac16+\tfrac16pq}{2pq}
		=\frac1{12}-\frac{B_2(h)}{2pq}
		=a_1(h;p).
	\]
	Johnson's form is the same statement linearised, his $\xi$ --- defined by the requirement that
	$Np+\xi$ be the least integer exceeding $Np$ --- being our $h_N$.

	Two remarks are in order.  First, this is a check: two independent derivations, seventy years
	apart, of the same coefficient.  Second, and more to the point, Frame writes the lattice
	dependence as the product $(Np-[Np])(Nq-[Nq])=h(1-h)$ of the two fractional parts, and there it
	stops --- the next order in \eqref{eq:frame} is bounded, not computed, and neither he nor
	Johnson has a structure to iterate.  The identity $h(1-h)=\tfrac16-B_2(h)$ is what turns their
	isolated correction into the first term of a sequence.  What Theorem~\ref{thm:mad} adds is
	precisely that: the recognition that the oscillating coefficients are Bernoulli polynomials of
	$h_N$, and hence all of them at once.
\end{remark}

\begin{remark}[Symmetry]\label{rem:symmetry}
	Relabelling the two outcomes replaces $X$ by $N-X\sim\Bin(N,q)$ and leaves $|X-Np|$
	unchanged, while $h_N\mapsto 1-h_N$ (for $h_N\in(0,1)$).  Accordingly the coefficients
	\eqref{eq:ak} are invariant under $(p,h)\mapsto(q,1-h)$: for odd $k$ the weight
	$p^{-k}+q^{-k}$ is symmetric and $B_{k+1}(1-h)=B_{k+1}(h)$, while for even $k$ the weight
	$p^{-k}-q^{-k}$ changes sign together with $B_{k+1}(1-h)=-B_{k+1}(h)$.  In the same vein,
	every coefficient $d_m$ below is a polynomial in $(pq)^{-1}$ and $q-p$, subject to
	$(q-p)^2=1-4pq$; the odd Bernoulli polynomials always appear multiplied by the
	antisymmetric factor $q-p$, and the even ones by symmetric factors.
\end{remark}

The first three multiplicative coefficients are
\begin{align}
	d_1&=\frac1{12}-\frac{B_2(h)}{2pq},
	\label{eq:d1}\\[4pt]
	d_2&=\frac{1+5p^2q^2}{1440\,p^2q^2}
	  +\frac{1-pq}{24\,p^2q^2}\,B_2(h)
	  +\frac{q-p}{6\,p^2q^2}\,B_3(h)
	  +\frac{1}{8\,p^2q^2}\,B_4(h),
	\label{eq:d2}\\[4pt]
	d_3&=\frac{1}{17280\,p^2q^2}-\frac{1}{181440\,p^3q^3}-\frac{973}{362880}
	\notag\\
	  &\quad
	  -\frac{(1-pq)^2}{576\,p^3q^3}\,B_2(h)
	  -\frac{(q-p)(3-pq)}{72\,p^3q^3}\,B_3(h)
	\notag\\
	  &\quad
	  -\frac{5(2-5pq)}{96\,p^3q^3}\,B_4(h)
	  -\frac{q-p}{12\,p^3q^3}\,B_5(h)
	  -\frac{1}{48\,p^3q^3}\,B_6(h),
	\label{eq:d3}
\end{align}
all evaluated at $h=h_N$.  Note that $1+5p^2q^2=5p^4-10p^3+5p^2+1$; the form on the left makes
the dependence on the single symmetric invariant $pq$ visible.

\begin{example}[The symmetric case]\label{ex:half}
	For $p=\tfrac12$ the displacement takes only two values: $h_N=0$ for even $N$ and
	$h_N=\tfrac12$ for odd $N$.  Since $B_2(0)=\tfrac16$ and $B_2(\tfrac12)=-\tfrac1{12}$,
	\eqref{eq:d1} gives $d_1=-\tfrac14$ for even $N$ and $d_1=+\tfrac14$ for odd $N$: the
	first correction is a pure parity oscillation.  At the next order the oscillation cancels:
	$B_3(0)=B_3(\tfrac12)=0$, $B_4(0)=-\tfrac1{30}$, $B_4(\tfrac12)=\tfrac7{240}$, and
	\eqref{eq:d2} gives $d_2=\tfrac1{32}$ for \emph{both} parities.  Thus
	\begin{align*}
		\E\,\Bigl|X-\frac N2\Bigr|
		&=\sqrt{\frac{N}{2\pi}}
		\Bigl[\,1\mp\frac1{4N}+\frac1{32N^2}+O(N^{-3})\Bigr],\\
		&\hspace{35mm} X\sim\Bin\bigl(N,\tfrac12\bigr),
	\end{align*}
	the sign being $-$ for even and $+$ for odd $N$.

	This particular case carries no news.  For even $N=2m$, De~Moivre's formula
	\eqref{eq:demoivre} reduces $\E|X-N/2|$ to a multiple of the central binomial coefficient
	$\binom{2m}m$, and the display above is then nothing but the classical
	\[
		\binom{2m}m=\frac{4^m}{\sqrt{\pi m}}
		\Bigl(1-\frac1{8m}+\frac1{128m^2}+O(m^{-3})\Bigr)
	\]
	rewritten in the variable $N=2m$; see for instance \cite{elezovic_jis} and the references
	there.  The oscillation between the two parities is likewise visible in \eqref{eq:frame}, since
	$h(1-h)$ takes the two values $0$ and $\tfrac14$.  We record the example because it is the one
	place where the general answer can be compared against a completely classical one --- and it
	agrees.  The content of Theorem~\ref{thm:mad} is the behaviour at a general $p$, where $h_N$ no
	longer takes finitely many values.
\end{example}

% =====================================================================
\section{Two-sided bounds}\label{sec:bounds}

An asymptotic expansion need not bound the function it represents, and the expansion of
Theorem~\ref{thm:mad} is a case in point.  Off the lattice the sign of its remainder is not
fixed: the coefficient $d_2(h;p)$ of \eqref{eq:d2} is positive for some displacements $h$ and
negative for others --- it carries the antisymmetric $B_3(h)$ and the sign-changing $B_4(h)$ ---
and likewise $d_3$.  Consequently, for a general $p$ the truncated partial sums of
\eqref{eq:mad-mult} do \emph{not} bracket $\E|X-Np|$; which side they fall on depends on $h_N$,
and $h_N$ oscillates.  This is the structural reason the classical literature turns to bounds
\emph{uniform in the lattice position} (Berend and Kontorovich~\cite{berend_kontorovich}) rather
than to truncations of an expansion.  We record two things the expansion nevertheless gives: an
unconditional two-sided bound of logarithmic width $O(N^{-2})$ in the interior, and, at an integer
mean, a collapse to a sign-alternating series in odd powers of $N^{-1}$ that is genuinely
enveloping (Theorem~\ref{thm:enveloping}).

\subsection*{An unconditional two-sided bound}

The exact reduction \eqref{eq:demoivre} turns any two-sided bound on a single factorial into one on
$\E|X-Np|$, in the non-degenerate range $1\le\nu\le N-1$.  We use Robbins's sharpening of Stirling's
formula~\cite{robbins},
\begin{equation}\label{eq:robbins}
	\sqrt{2\pi n}\,\Bigl(\tfrac ne\Bigr)^{n}e^{1/(12n+1)}
	<n!<
	\sqrt{2\pi n}\,\Bigl(\tfrac ne\Bigr)^{n}e^{1/(12n)},\qquad n\ge1 .
\end{equation}

\begin{proposition}[A rigorous two-sided bound]\label{prop:robbins}
	Let $X\sim\Bin(N,p)$, $q=1-p$, $\nu=\lceil Np\rceil$, and suppose $1\le\nu\le N-1$.  Then
	\begin{equation}\label{eq:robbins-bound}
		2\nu q\,e^{\,M_N+\varepsilon_N^{-}}\;\le\;\E|X-Np|\;\le\;2\nu q\,e^{\,M_N+\varepsilon_N^{+}},
	\end{equation}
	where
	\[
		M_N=\tfrac12\log\frac{N}{2\pi\nu(N-\nu)}
		 +\nu\log\frac{Np}{\nu}+(N-\nu)\log\frac{Nq}{N-\nu},
	\]
	\[
		\varepsilon_N^{+}=\frac1{12N}-\frac1{12\nu+1}-\frac1{12(N-\nu)+1},\qquad
		\varepsilon_N^{-}=\frac1{12N+1}-\frac1{12\nu}-\frac1{12(N-\nu)} .
	\]
	Both bounds are elementary, and their logarithmic gap is
	\begin{equation}\label{eq:robbins-gap}
		\varepsilon_N^{+}-\varepsilon_N^{-}
		=\sum_{n\in\{N,\,\nu,\,N-\nu\}}\Bigl(\frac1{12n}-\frac1{12n+1}\Bigr)
		<\frac1{144}\Bigl(\frac1{N^2}+\frac1{\nu^2}+\frac1{(N-\nu)^2}\Bigr).
	\end{equation}
\end{proposition}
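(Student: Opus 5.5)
We would start from De~Moivre's identity \eqref{eq:demoivre}, $\E|X-Np|=2\nu q\,b(\nu;N,p)$, and write the mass as a quotient of factorials,
\[
	b(\nu;N,p)=\frac{N!}{\nu!\,(N-\nu)!}\,p^{\nu}q^{N-\nu}.
\]
The hypothesis $1\le\nu\le N-1$ guarantees that all three of $N$, $\nu$, $N-\nu$ are at least $1$, so Robbins's inequality \eqref{eq:robbins} applies to each factorial. For the upper bound we would use the upper Robbins bound on $N!$ and the lower bounds on $\nu!$ and $(N-\nu)!$. For the lower bound we would reverse the three choices. Both sides of \eqref{eq:robbins} are positive, so the inequalities survive the quotient.

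Next we would check that the Stirling main parts assemble into $e^{M_N}$. The square roots give
\[
	\sqrt{2\pi N}\big/\bigl(\sqrt{2\pi\nu}\sqrt{2\pi(N-\nu)}\bigr)=\bigl(N/(2\pi\nu(N-\nu))\bigr)^{1/2}.
\]
The factors $e^{-N}$, $e^{\nu}$ and $e^{N-\nu}$ cancel, because $\nu+(N-\nu)=N$. What remains is
\[
	N^N\nu^{-\nu}(N-\nu)^{-(N-\nu)}p^\nu q^{N-\nu}.
\]
Splitting $N^N=N^\nu N^{N-\nu}$ turns this into $(Np/\nu)^{\nu}\,(Nq/(N-\nu))^{N-\nu}$, whose logarithm is exactly the remaining part of $M_N$. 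The exponential correction factors then combine to $e^{\varepsilon_N^{\pm}}$ as displayed: on the upper side they are $+1/(12N)$, $-1/(12\nu+1)$ and $-1/(12(N-\nu)+1)$, and the lower side is analogous. Multiplying by $2\nu q$ gives \eqref{eq:robbins-bound}. Since Robbins's inequalities are strict, the resulting bounds are in fact strict, which in particular gives the non-strict form stated.

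For the gap \eqref{eq:robbins-gap} we would subtract $\varepsilon_N^-$ from $\varepsilon_N^+$ and regroup the six terms by $n\in\{N,\nu,N-\nu\}$. Each group is $\frac1{12n}-\frac1{12n+1}=\frac{1}{12n(12n+1)}$. Since $12n(12n+1)>144n^2$, each group is strictly less than $\frac1{144n^2}$, and summing over the three values of $n$ yields the strict inequality.

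No step here is a genuine obstacle. The only points needing care are bookkeeping ones: pairing the correct direction of Robbins's bound with numerator versus denominator, and confirming that the hypothesis $1\le\nu\le N-1$ is exactly what makes Robbins applicable to both $\nu!$ and $(N-\nu)!$. One might also remark, though the statement does not require it, that in the interior $\nu\asymp Np$ and $N-\nu\asymp Nq$, so the gap is $O(N^{-2})$ as advertised.
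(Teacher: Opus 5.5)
Your proposal is correct and is essentially the paper's proof: De~Moivre's reduction, then Robbins's inequality applied to $N!$, $\nu!$ and $(N-\nu)!$ with the upper and lower choices swapped between numerator and denominator, and finally the gap bound $\frac1{12n(12n+1)}<\frac1{144n^2}$. The paper packages the same bookkeeping as $\log b(\nu;N,p)=M_N+r_N-r_\nu-r_{N-\nu}$ with $r_n\in\bigl(\frac1{12n+1},\frac1{12n}\bigr)$, whereas you verify explicitly that the Stirling main parts assemble into $e^{M_N}$ and also note, correctly, that the bounds are in fact strict.
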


\begin{proof}
	By Proposition~\ref{prop:demoivre}, $\E|X-Np|=2\nu q\,b(\nu;N,p)$ with
	$b(\nu;N,p)=\frac{N!}{\nu!\,(N-\nu)!}\,p^{\nu}q^{N-\nu}$.  Writing
	$n!=\sqrt{2\pi n}\,(n/e)^n e^{r_n}$ with $r_n\in\bigl(\tfrac1{12n+1},\tfrac1{12n}\bigr)$ by
	\eqref{eq:robbins}, we have $\log b(\nu;N,p)=M_N+r_N-r_\nu-r_{N-\nu}$.  Bounding $r_N$ above and
	$r_\nu,r_{N-\nu}$ below gives the upper bound in \eqref{eq:robbins-bound}, and the reverse choice
	gives the lower bound.  For \eqref{eq:robbins-gap} use
	$\frac1{12n}-\frac1{12n+1}=\frac1{12n(12n+1)}<\frac1{144\,n^2}$.
\end{proof}

\begin{remark}[What the second order buys]\label{rem:secondorder}
	The leading factor $\sqrt{2Npq/\pi}$ alone approximates $\E|X-Np|$ with relative error
	$d_1(h_N;p)/N+O(N^{-2})$, of order $N^{-1}$; the Frame--Johnson correction \eqref{eq:frame}
	removes that $N^{-1}$ term but is itself only an approximation.  The bracket
	\eqref{eq:robbins-bound} pins $\E|X-Np|$ from both sides with relative width
	\eqref{eq:robbins-gap} of order $N^{-2}$, uniformly in the lattice position and with no appeal to
	the (unbounded) tail of the expansion.  It is thus one order sharper than the leading-order
	bounds for all $N$ beyond a $p$-dependent threshold: at $p=\tfrac13,\ N=100$ the width
	\eqref{eq:robbins-gap} is $8\times10^{-6}$, against a leading-order relative error of
	$2\times10^{-3}$.  The non-asymptotic bounds of Berend and Kontorovich~\cite{berend_kontorovich}
	remain the sharper statement for the smallest $N$ and near the endpoints $p\to0,1$, where
	\eqref{eq:robbins-gap} degrades through the factors $\nu^{-2}$ and $(N-\nu)^{-2}$.
\end{remark}

\subsection*{Integer mean: a sign-alternating odd series}

At an integer mean the obstruction of the opening paragraph disappears.

\begin{theorem}[Parity collapse and sign alternation at $Np\in\Z$]\label{thm:envelope}
	Let $Np\in\Z$, so that $h_N=0$.  Then every even-order coefficient in the logarithmic expansion
	\eqref{eq:mad-log} vanishes, $a_2=a_4=\dots=0$, and
	\begin{equation}\label{eq:odd-series}
		\log\frac{\E|X-Np|}{\sqrt{2Npq/\pi}}\sim\sum_{j\ge1}\frac{a_{2j-1}(0;p)}{N^{2j-1}},
		\qquad
		a_{2j-1}(0;p)=\frac{B_{2j}}{(2j-1)\,2j}\Bigl[1-\bigl(p^{\,1-2j}+q^{\,1-2j}\bigr)\Bigr].
	\end{equation}
	These coefficients alternate strictly in sign:
	\begin{equation}\label{eq:alternation}
		(-1)^{j}\,a_{2j-1}(0;p)>0\qquad(j\ge1,\ 0<p<1).
	\end{equation}
\end{theorem}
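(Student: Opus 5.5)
The plan is to specialise Theorem~\ref{thm:mad} at $h=0$, which is admissible: when $Np\in\Z$ the convention there gives $h_N=0$, and the uniform statement \eqref{eq:mad-trunc} (with $H=[0,1]$) still applies. The proof then has two steps: read off the coefficients, then determine their signs.

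\textbf{Even coefficients.} From \eqref{eq:ak-even}, $a_{2j}(0;p)$ is a multiple of $B_{2j+1}(0)=B_{2j+1}$. This vanishes for every $j\ge1$, because the odd-index Bernoulli numbers beyond $B_1$ are zero. Hence the logarithmic series in \eqref{eq:mad-log} contains only odd powers of $N^{-1}$.

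\textbf{Odd coefficients.} From \eqref{eq:ak-odd} with $B_{2j}(0)=B_{2j}$ we get
\[
a_{2j-1}(0;p)=\frac{B_{2j}}{2j(2j-1)}\Bigl[1-\bigl(p^{1-2j}+q^{1-2j}\bigr)\Bigr],
\]
which is exactly the formula in \eqref{eq:odd-series}. The asymptotic sense of $\sim$ is inherited from Theorem~\ref{thm:mad}: we take logarithms of \eqref{eq:mad-trunc}, or equivalently use the truncated logarithmic form obtained from Lemma~\ref{lem:unequal} before exponentiating.

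\textbf{Sign alternation.} Split $a_{2j-1}(0;p)$ into two factors.

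First, the bracket is strictly negative. Since $0<p,q<1$ and $1-2j\le -1$, we have $p^{1-2j}>1$ and $q^{1-2j}>1$. Therefore $p^{1-2j}+q^{1-2j}>2>1$, so the bracket is $<-1<0$. For $j=1$ one can also check this directly: the bracket equals $1-1/(pq)$, which is $\le -3$.

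Second, the sign of the Bernoulli numbers is classical:
\[
(-1)^{j+1}B_{2j}>0,
\]
for instance from Euler's formula $B_{2j}=(-1)^{j+1}\,2(2j)!\,\zeta(2j)/(2\pi)^{2j}$.

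Combining the two factors, with the positive denominator $2j(2j-1)$, gives
\[
(-1)^{j}a_{2j-1}(0;p)=\frac{(-1)^{j+1}B_{2j}}{2j(2j-1)}\bigl[(p^{1-2j}+q^{1-2j})-1\bigr]>0,
\]
which is \eqref{eq:alternation}.

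There is no real obstacle in this argument. The only points needing care are, first, checking that the convention $h_N=0$ (rather than $h=1$) is the one to which Theorem~\ref{thm:mad}'s formula applies, which is settled by Proposition~\ref{prop:demoivre}. Second, the inequality must be strict for all $p\in(0,1)$, and this is guaranteed by the bound bracket $<-1$. The genuinely hard statement, that the series is enveloping, is not claimed here and would require the Binet-kernel argument.
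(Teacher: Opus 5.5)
Your proposal is correct and follows the paper's own proof essentially step for step. You set $h=0$ in \eqref{eq:ak-even} and \eqref{eq:ak-odd}, use $B_{2j+1}=0$ and $B_{2j}(0)=B_{2j}$, and then combine the negativity of the bracket (because $p^{1-2j}+q^{1-2j}>2$) with the sign $(-1)^{j+1}$ of $B_{2j}$. Your extra remark, that the truncated logarithmic form is inherited uniformly from Theorem~\ref{thm:mad}, is a correct addition that the paper leaves implicit.
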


\begin{proof}
	At $h=0$ the even-order coefficient $a_{2j}(0;p)$ of \eqref{eq:ak-even} carries the factor
	$B_{2j+1}(0)=B_{2j+1}=0$, so it vanishes; \eqref{eq:odd-series} is \eqref{eq:ak-odd} at $h=0$,
	using $B_{2j}(0)=B_{2j}$.  For \eqref{eq:alternation}, the bracket $1-(p^{1-2j}+q^{1-2j})$ is
	negative, since $p^{1-2j}+q^{1-2j}=p^{-(2j-1)}+q^{-(2j-1)}>2$ for $0<p<1$, while $B_{2j}$ has sign
	$(-1)^{j+1}$; the product has sign $(-1)^{j}$.
\end{proof}

The specialisation $h_N=0$ of Proposition~\ref{prop:robbins} is particularly clean: then $\nu=Np$,
$N-\nu=Nq$, and $M_N=\tfrac12\log\bigl(1/(2\pi Npq)\bigr)$, so $2\nu q\,e^{M_N}$ is exactly the
leading factor $\sqrt{2Npq/\pi}$.

\begin{corollary}[Rigorous bracket at an integer mean]\label{cor:integer-bracket}
	If $Np\in\Z$ and $1\le Np\le N-1$, then
	\begin{equation}\label{eq:integer-bracket}
		\sqrt{\frac{2Npq}{\pi}}\;e^{\,\varepsilon_N^{-}}\;\le\;\E|X-Np|\;\le\;
		\sqrt{\frac{2Npq}{\pi}}\;e^{\,\varepsilon_N^{+}},
		\qquad
		\varepsilon_N^{\pm}=\frac{a_1(0;p)}{N}+O\!\bigl(N^{-2}\bigr),
	\end{equation}
	with $\varepsilon_N^{\pm}$ the explicit quantities of Proposition~\ref{prop:robbins} and
	$a_1(0;p)=\tfrac1{12}\bigl(1-(pq)^{-1}\bigr)$.
\end{corollary}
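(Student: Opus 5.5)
The bracket itself is Proposition~\ref{prop:robbins} specialised to $\nu=Np$. The only work is to identify the prefactor and to expand $\varepsilon_N^{\pm}$.

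First, since $Np\in\Z$ and $1\le Np\le N-1$, we have $\nu=\lceil Np\rceil=Np$ and $N-\nu=Nq$, so the hypothesis $1\le\nu\le N-1$ of Proposition~\ref{prop:robbins} holds. Substituting into $M_N$, the two entropy terms $\nu\log(Np/\nu)$ and $(N-\nu)\log(Nq/(N-\nu))$ both vanish. The first term becomes $\tfrac12\log\bigl(N/(2\pi N^2pq)\bigr)=\tfrac12\log\bigl(1/(2\pi Npq)\bigr)$. Hence
\[
2\nu q\,e^{M_N}=2Npq\,(2\pi Npq)^{-1/2}=\sqrt{2Npq/\pi},
\]
and \eqref{eq:robbins-bound} becomes \eqref{eq:integer-bracket} verbatim.

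Second, I would expand the explicit quantities with $\nu=Np$ and $N-\nu=Nq$. For $n\in\{N,Np,Nq\}$ we have
\[
\frac1{12n+1}=\frac1{12n}+O(n^{-2})=\frac1{12n}+O(N^{-2}),
\]
with the implied constant depending only on $p$ through $p^{-2}$ and $q^{-2}$. Therefore both $\varepsilon_N^{+}$ and $\varepsilon_N^{-}$ equal
\[
\frac1{12N}-\frac1{12Np}-\frac1{12Nq}+O(N^{-2})=\frac1{12N}\Bigl(1-\frac1p-\frac1q\Bigr)+O(N^{-2}).
\]
Since $p^{-1}+q^{-1}=(pq)^{-1}$, this is $\tfrac1{12}\bigl(1-(pq)^{-1}\bigr)/N+O(N^{-2})$.

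Third, I would check this against the value $a_1(0;p)$ in \eqref{eq:odd-series} with $j=1$. There $B_2/2\cdot\bigl[1-(p^{-1}+q^{-1})\bigr]=\tfrac1{12}\bigl(1-(pq)^{-1}\bigr)$, and it agrees with \eqref{eq:a123} at $h=0$, where $B_2(0)=\tfrac16$. The gap \eqref{eq:robbins-gap} is $O(N^{-2})$, which confirms that both endpoints share the same first-order term.

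There is no real obstacle. The only point needing care is that the $O(N^{-2})$ constants blow up like $(pq)^{-2}$, so they are uniform only for $p$ in a compact subset of $(0,1)$. I would state that explicitly rather than claim uniformity in $p$.
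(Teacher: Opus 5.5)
Your proposal is correct and follows the paper's proof: you specialise Proposition~\ref{prop:robbins} to $\nu=Np$, observe that $M_N$ collapses so that $2\nu q\,e^{M_N}=\sqrt{2Npq/\pi}$, and expand $\varepsilon_N^{\pm}$ to first order using $p^{-1}+q^{-1}=(pq)^{-1}$. Your caveat is a sensible precision that the paper leaves implicit: the $O(N^{-2})$ constant grows like $p^{-2}+q^{-2}$, so it is uniform only for $p$ in compact subsets of $(0,1)$.
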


\begin{proof}
	The bracket is \eqref{eq:robbins-bound} with $\nu=Np$, $N-\nu=Nq$ and
	$2\nu q\,e^{M_N}=\sqrt{2Npq/\pi}$ as just computed.  Expanding to first order,
	$\varepsilon_N^{\pm}=\tfrac1{12N}\bigl(1-p^{-1}-q^{-1}\bigr)+O(N^{-2})=a_1(0;p)/N+O(N^{-2})$,
	since $p^{-1}+q^{-1}=(pq)^{-1}$.
\end{proof}

	Sign alternation \eqref{eq:alternation} is the formal signature of an enveloping series.  The
	next theorem shows that the odd series \eqref{eq:odd-series} is in fact enveloping: the
	remainder after any number of terms has the sign of the first term omitted, so consecutive
	truncations bracket the logarithm.  The mechanism is a sign-definite integral representation of
	the combined three-gamma Stirling remainder.

\begin{theorem}[The odd series envelops]\label{thm:enveloping}
	Let $Np\in\Z$, $1\le Np\le N-1$, and write
	\[
		T_M:=\sum_{j=1}^{M}\frac{a_{2j-1}(0;p)}{N^{2j-1}},\qquad M\ge0,
	\]
	for the partial sums of \eqref{eq:odd-series}.  Then for every $M\ge0$
	\begin{equation}\label{eq:envelope-sign}
		(-1)^{M+1}\Bigl[\log\frac{\E|X-Np|}{\sqrt{2Npq/\pi}}-T_M\Bigr]\;>\;0 ,
	\end{equation}
	the sign of the first omitted term.  Consequently the logarithm lies strictly between any two
	consecutive partial sums; in particular, for all such $N$,
	\begin{equation}\label{eq:envelope-13}
		\sqrt{\frac{2Npq}{\pi}}\;e^{\,a_1(0;p)/N}
		\;<\;\E|X-Np|\;<\;
		\sqrt{\frac{2Npq}{\pi}}\;e^{\,a_1(0;p)/N+a_3(0;p)/N^{3}} .
	\end{equation}
\end{theorem}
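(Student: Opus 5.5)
The plan is to bypass the asymptotic machinery and work with the exact De~Moivre identity and Binet's function. At an integer mean the proof of Proposition~\ref{prop:demoivre} gives $\E|X-Np|=2\nu q\,b(\nu;N,p)$ with $\nu=Np$ and $N-\nu=Nq$. Write $n!=\sqrt{2\pi n}\,(n/e)^n e^{\mu(n)}$, where $\mu$ is Binet's function. As in the proof of Proposition~\ref{prop:robbins} and the computation preceding Corollary~\ref{cor:integer-bracket}, all elementary Stirling factors collapse to $\sqrt{2Npq/\pi}$, since $M_N=\tfrac12\log(1/(2\pi Npq))$. This leaves the exact identity
\[
	\log\frac{\E|X-Np|}{\sqrt{2Npq/\pi}}=\mu(N)-\mu(Np)-\mu(Nq).
\]
So \eqref{eq:envelope-sign} becomes a statement about the combined three-term remainder of the Stirling series for $\mu$.

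Next I use Binet's second formula in its Laplace form,
\[
	\mu(x)=\int_0^\infty \varphi(t)\,e^{-xt}\,dt,\qquad \varphi(t)=\frac1t\Bigl(\frac1{e^t-1}-\frac1t+\frac12\Bigr),
\]
and the Mittag-Leffler expansion $\frac1{e^t-1}-\frac1t+\frac12=\sum_{k\ge1}\frac{2t}{t^2+4\pi^2k^2}$. This gives $\varphi(t)=\sum_{k\ge1}2/(t^2+4\pi^2k^2)$. Expanding each summand by the finite geometric formula
\[
	\frac1{t^2+a^2}=\sum_{j=0}^{M-1}\frac{(-1)^j t^{2j}}{a^{2j+2}}+\frac{(-1)^M t^{2M}}{a^{2M}(t^2+a^2)}
\]
and summing over $k$ via $\zeta(2j)$ and Euler's formula yields the Taylor terms $\sum_{j=1}^{M}B_{2j}t^{2j-2}/(2j)!$. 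The remainder is
\[
	(-1)^M t^{2M}\psi_M(t),\qquad \psi_M(t)=\sum_{k\ge1}\frac{2}{(2\pi k)^{2M}(t^2+4\pi^2k^2)}>0 .
\]
Integrating against $e^{-xt}$ gives
\[
	\mu(x)=\sum_{j=1}^{M}\frac{B_{2j}}{2j(2j-1)x^{2j-1}}+(-1)^M\int_0^\infty t^{2M}\psi_M(t)e^{-xt}\,dt .
\]

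Applying this at $x=N$, $Np$ and $Nq$ and subtracting, the polynomial parts combine to
\[
	\sum_{j\le M}\frac{B_{2j}}{2j(2j-1)}\bigl(1-p^{1-2j}-q^{1-2j}\bigr)N^{1-2j}=T_M,
\]
matching \eqref{eq:odd-series}. The combined remainder is
\[
	(-1)^M\int_0^\infty t^{2M}\psi_M(t)\bigl[e^{-Nt}-e^{-Npt}-e^{-Nqt}\bigr]\,dt .
\]
The key observation is that the bracket is strictly negative for $t>0$, because already $e^{-Npt}>e^{-Nt}$ since $p<1$. Hence the whole remainder has sign $(-1)^{M+1}$, which is \eqref{eq:envelope-sign}. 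By \eqref{eq:alternation} this is the sign of the first omitted term $a_{2M+1}(0;p)/N^{2M+1}$. Taking $M=1$ and $M=2$ shows the logarithm lies strictly between consecutive partial sums, and in particular $T_1<\log(\cdot)<T_2$; exponentiating gives \eqref{eq:envelope-13}.

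The main obstacle is the sign-definiteness of the single-gamma remainder kernel $\psi_M$; everything else is bookkeeping. I plan to handle it through the partial-fraction route above, which makes positivity termwise and sidesteps any delicate analysis of $\varphi$. The one point to check carefully is that the interchange of the $k$-sum with the integral is justified, which holds by positivity and monotone convergence. I also need that the constant terms cancel exactly, as they do since $\mu$ carries no constant.
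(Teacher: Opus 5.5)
Your proof is correct and is essentially the paper's own argument: the exact identity $\log\bigl(\E|X-Np|/\sqrt{2Npq/\pi}\bigr)=\mu(N)-\mu(Np)-\mu(Nq)$, the Mittag--Leffler expansion of the Binet kernel truncated by a finite geometric series to give a remainder of fixed sign $(-1)^M$, and the pointwise negativity of $e^{-Nt}-e^{-Npt}-e^{-Nqt}$. The differences are cosmetic: you integrate each gamma factor's remainder before combining, whereas the paper combines first; the Laplace-type representation you use is Binet's \emph{first} formula, not his second; and no sum--integral interchange is actually needed, since the kernel decomposition holds pointwise and only finitely many integrable pieces are integrated.
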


\begin{proof}
	By Binet's first formula \cite[\S12.31]{ww},
	\[
		\log\Gamma(x)=\Bigl(x-\tfrac12\Bigr)\log x-x+\tfrac12\log2\pi+\mu(x),
	\]
	\[
		\mu(x)=\int_0^\infty\varphi(t)\,e^{-xt}\,dt,
		\quad
		\varphi(t)=\frac1t\Bigl(\frac1{e^{t}-1}-\frac1t+\frac12\Bigr),
	\]
	valid for $x>0$.  Applying this to the three factors of
	$\E|X-Np|=2Npq\,b(Np;N,p)$ and using the computation preceding
	Corollary~\ref{cor:integer-bracket} --- at $\nu=Np$ the elementary terms assemble exactly into
	the leading factor --- we obtain the \emph{exact} identity
	\begin{equation}\label{eq:binet-identity}
		\log\frac{\E|X-Np|}{\sqrt{2Npq/\pi}}
		=\mu(N)-\mu(Np)-\mu(Nq)
		=\int_0^\infty\varphi(t)\,\Delta_N(t)\,dt,
	\end{equation}
	where
	\[
		\Delta_N(t):=e^{-Nt}-e^{-Npt}-e^{-Nqt}.
	\]
	Two facts about the two factors of the integrand:

	(i) \emph{Kernel.}  By the Mittag--Leffler expansion of the hyperbolic cotangent
	\cite[\S7.4]{ww},
	\[
		\frac1{e^{t}-1}-\frac1t+\frac12=\sum_{k\ge1}\frac{2t}{t^{2}+4\pi^{2}k^{2}},
		\qquad\text{so}\qquad
		\varphi(t)=\sum_{k\ge1}\frac{2}{t^{2}+4\pi^{2}k^{2}} .
	\]
	Truncating the geometric series of each summand,
	\[
		\frac{2}{t^{2}+4\pi^{2}k^{2}}
		=\sum_{n=1}^{M}\frac{2(-1)^{n-1}t^{2n-2}}{(4\pi^{2}k^{2})^{n}}
		+\frac{2(-1)^{M}t^{2M}}{(4\pi^{2}k^{2})^{M}\,(t^{2}+4\pi^{2}k^{2})},
	\]
	and summing over $k$ with Euler's
	$\sum_{k\ge1}2(4\pi^{2}k^{2})^{-n}=2\zeta(2n)(2\pi)^{-2n}=(-1)^{n-1}B_{2n}/(2n)!$ gives, for
	every $M\ge0$ and $t>0$,
	\begin{equation}\label{eq:QM}
		Q_M(t):=\varphi(t)-\sum_{n=1}^{M}\frac{B_{2n}}{(2n)!}\,t^{2n-2}
		=2(-1)^{M}\sum_{k\ge1}\frac{t^{2M}}{(4\pi^{2}k^{2})^{M}\,(t^{2}+4\pi^{2}k^{2})},
	\end{equation}
	whence $(-1)^{M}Q_M(t)>0$ on $(0,\infty)$.

	(ii) \emph{Bracket.}  $\Delta_N(t)<0$ for every $t>0$: already $e^{-Npt}>e^{-Nt}$, since
	$p<1$.

	Finally, term-by-term integration with
	$\int_0^\infty t^{2n-2}e^{-xt}\,dt=(2n-2)!\,x^{1-2n}$ turns the subtracted polynomial of
	\eqref{eq:QM} into precisely the partial sum $T_M$:
	\[
		\int_0^\infty\frac{B_{2n}}{(2n)!}\,t^{2n-2}\,\Delta_N(t)\,dt
		=\frac{B_{2n}}{(2n-1)2n}
		\Bigl[N^{1-2n}-(Np)^{1-2n}-(Nq)^{1-2n}\Bigr]
		=\frac{a_{2n-1}(0;p)}{N^{2n-1}}
	\]
	by \eqref{eq:odd-series}.  Hence
	\[
		\log\frac{\E|X-Np|}{\sqrt{2Npq/\pi}}-T_M=\int_0^\infty Q_M(t)\,\Delta_N(t)\,dt ,
	\]
	an absolutely convergent integral ($Q_M(t)=O(t^{2M})$ at $0$ and $O(t^{2M-2}+t^{-1})$ at
	$\infty$, against the exponential decay of $\Delta_N$) whose integrand has, by (i) and (ii), the
	fixed sign $(-1)^{M+1}$.  This is \eqref{eq:envelope-sign}; since the signs at $M$ and $M+1$ are
	opposite, the value lies strictly between $T_M$ and $T_{M+1}$, and $M=1,2$ give
	\eqref{eq:envelope-13}.
\end{proof}

\begin{remark}\label{rem:enveloping}
	Theorem~\ref{thm:enveloping} and Corollary~\ref{cor:integer-bracket} are complementary: the
	former brackets $\E|X-Np|$ by truncations of its own expansion, with a width of the order of the
	first omitted term, the latter by Robbins-type perturbations of the leading term, with width
	$O(N^{-2})$ independent of the expansion.  The collapse to an odd series is the same phenomenon
	that governs the central binomial coefficient, whose expansion is likewise in odd powers about
	the centre~\cite{elezovic_jis}; Example~\ref{ex:half} is the instance $p=\tfrac12$.  For
	$Np\notin\Z$ no statement of this kind is possible --- the sign of $d_2(h_N;p)$ already depends
	on $h_N$, as noted at the head of this section.
\end{remark}

% =====================================================================
\section{Averaging the oscillation}\label{sec:average}

	The coefficients $d_m(h_N;p)$ of \eqref{eq:mad-mult} do not converge as $N\to\infty$.  It is
	natural to ask for their mean value, and the answer is supplied by the multiplication theorem for
	Bernoulli polynomials,
\begin{equation}\label{eq:mult-bernoulli}
		B_k(mt)=m^{k-1}\sum_{j=0}^{m-1}B_k\Bigl(t+\frac jm\Bigr),
		\qquad m\in\mathbb N .
\end{equation}

By construction each $d_m$ is a finite linear combination
\begin{equation}\label{eq:dm-structure}
	d_m(h;p)=\delta_m(p)+\sum_{j=2}^{2m}\gamma_{m,j}(p)\,B_j(h),
\end{equation}
with coefficients $\delta_m,\gamma_{m,j}\in\mathbb Q\bigl[(pq)^{-1},q-p\bigr]$; see
\eqref{eq:d1}--\eqref{eq:d3}.  We call $\delta_m(p)$ the \emph{smooth part} and the rest the
\emph{lattice part}.

Two facts underlie \eqref{eq:dm-structure}.  First, $\{B_j\}_{j\ge0}$ is a graded basis of the
polynomial ring, so any polynomial in $h$ can be written in it --- in particular a product such as
$B_2(h)^2=B_4(h)+\tfrac13B_2(h)+\tfrac1{180}$, of the kind produced by exponentiating.  Second,
and less obviously, \emph{no $B_1$-term occurs}.  Indeed, if $P=\sum_jc_jB_j$ then, since
$B_j(1)-B_j(0)=j\cdot0^{\,j-1}$ vanishes for $j\ge2$ and equals $1$ for $j=1$,
\begin{equation}\label{eq:c1-endpoints}
	c_1=P(1)-P(0).
\end{equation}
Every $a_k$ of \eqref{eq:ak} satisfies $a_k(0;p)=a_k(1;p)$, because $B_{k+1}(1)=B_{k+1}(0)$ for
$k\ge1$; hence so does every $d_m$, being a polynomial in the $a_k$; hence $c_1=0$ by
\eqref{eq:c1-endpoints}.

Both features of \eqref{eq:dm-structure} --- the coefficient ring and the top index $2m$ ---
hold in general, not only in the examples \eqref{eq:d1}--\eqref{eq:d3}.  By \eqref{eq:ak} each
$a_k$ is a $\mathbb Q\bigl[(pq)^{-1},q-p\bigr]$-multiple of the single polynomial $B_{k+1}(h)$,
the weight $p^{-k}+(-1)^{k+1}q^{-k}$ lying in that ring because $p+q=1$.  The recursion
\eqref{eq:mad-mult} writes $d_m$ as a $\mathbb Q$-combination of products
$a_{k_1}\cdots a_{k_r}$ with $k_1+\dots+k_r=m$, so its coefficients remain in
$\mathbb Q\bigl[(pq)^{-1},q-p\bigr]$ and its degree in $h$ is $\sum_i(k_i+1)=m+r\le2m$.  Expanded
in the Bernoulli basis and stripped of its $B_1$-term by the argument just given, this is
\eqref{eq:dm-structure}.

This is not an accident of the algebra.  The endpoints $h=0$ and $h=1$ describe the \emph{same}
situation, $Np\in\Z$, through the two admissible choices $\nu=Np$ and $\nu=Np+1$ in
\eqref{eq:demoivre}, and Proposition~\ref{prop:demoivre} says that these give the same value of
$\E|X-Np|$.  The absence of a $B_1$-term is precisely that identity, read off the expansion.

\begin{proposition}[Ces\`aro means]\label{prop:cesaro}
	Let $0<p<1$ and $h_N=\lceil Np\rceil-Np$.
	\begin{itemize}
	\item[(i)] If $p$ is irrational, then $(h_N)_{N\ge1}$ is equidistributed in $[0,1)$, and for
		every $m$
		\[
			\lim_{M\to\infty}\frac1M\sum_{N=1}^{M}d_m(h_N;p)=\delta_m(p).
		\]
	\item[(ii)] If $p=a/b$ with $\gcd(a,b)=1$, then $h_N$ is periodic in $N$ with period $b$ and
		takes each of the values $0,\tfrac1b,\dots,\tfrac{b-1}b$ exactly once per period.
		Consequently the mean of $d_m$ over a period is obtained from \eqref{eq:dm-structure}
		by the substitution
		\[
			B_j(h)\;\longmapsto\;b^{-j}B_j .
		\]
	\end{itemize}
\end{proposition}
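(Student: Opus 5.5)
The plan is to reduce both parts to averages of the single functions $B_j$, $j\ge2$, using the finite decomposition \eqref{eq:dm-structure}. Since $d_m(h;p)=\delta_m(p)+\sum_{j=2}^{2m}\gamma_{m,j}(p)B_j(h)$ has finitely many terms and coefficients independent of $N$, the Ces\`aro mean of $d_m(h_N;p)$ is $\delta_m(p)+\sum_j\gamma_{m,j}(p)\cdot(\text{mean of }B_j(h_N))$. It therefore suffices to compute the limiting mean of $B_j(h_N)$ for each fixed $j\ge2$.

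For (i), I would first note that $h_N=\lceil Np\rceil-Np$ equals $1-\{Np\}$ when $Np\notin\Z$, and for irrational $p$ this is always the case once $N\ge1$. Weyl's equidistribution theorem says $\{Np\}$ is equidistributed in $[0,1)$. The reflection $x\mapsto1-x$ preserves Lebesgue measure, so $h_N$ is equidistributed in $(0,1]$, hence in $[0,1)$ up to a null set. Each $B_j$ is continuous on $[0,1]$, so the Weyl criterion gives
\[
\frac1M\sum_{N\le M}B_j(h_N)\to\int_0^1B_j(t)\,dt .
\]
This integral equals $\bigl(B_{j+1}(1)-B_{j+1}(0)\bigr)/(j+1)=0$ for $j\ge1$, by the Appell relation $B_{j+1}'=(j+1)B_j$. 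Every lattice term therefore averages to zero, and only $\delta_m(p)$ survives. The step needing care is the endpoint convention: it is harmless here, and in any case $B_j(0)=B_j(1)$ for $j\ge2$.

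For (ii), take $p=a/b$ in lowest terms. Then $Np\bmod1$ depends only on $N\bmod b$, so $h_N$ has period $b$. As $N$ runs over a residue system mod $b$, $Na$ runs over all residues mod $b$ because $\gcd(a,b)=1$. Hence $\{Np\}$ takes the values $r/b$, $r=0,\dots,b-1$, once each. Under the convention $h_N=0$ at $Np\in\Z$, the value $h_N$ is $0$ when $r=0$ and $1-r/b=(b-r)/b$ otherwise, which again exhausts $\{0,\tfrac1b,\dots,\tfrac{b-1}b\}$ once each. The period mean of $B_j(h_N)$ is then $\frac1b\sum_{i=0}^{b-1}B_j(i/b)$. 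The multiplication theorem \eqref{eq:mult-bernoulli} with $t=0$ and multiplier $b$ gives $B_j(0)=b^{j-1}\sum_i B_j(i/b)$, so this mean equals $b^{-j}B_j$. The smooth part $\delta_m$ is unchanged, which yields the stated substitution.

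The main obstacle is only bookkeeping. One must check that the decomposition \eqref{eq:dm-structure} has $N$-independent coefficients and no $B_1$ term; this was established just before the proposition, and the $B_1$ term would in any case be harmless for (i). One must also handle the convention at integer $Np$ correctly in (ii), so that each value in $\{0,\tfrac1b,\dots\}$ appears exactly once rather than $1$ replacing $0$. Since $B_j(0)=B_j(1)$ for $j\ge2$, even the alternative convention gives the same mean.
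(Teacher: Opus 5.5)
Your proposal is correct and follows the paper's proof essentially step for step: linearity over the finite decomposition \eqref{eq:dm-structure}, then Weyl equidistribution of $\{Np\}$ carried over to $h_N=1-\{Np\}$ together with $\int_0^1B_j(t)\,dt=0$ for (i), and the residue count $h_N=((-Na)\bmod b)/b$ followed by the multiplication theorem \eqref{eq:mult-bernoulli} at $t=0$ for (ii). Your explicit treatment of the endpoint convention is a small but welcome addition to the paper's terser argument; since $B_j(0)=B_j(1)$ for $j\ge2$, the choice of convention does not affect the result.
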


\begin{proof}
	(i) By Weyl's theorem $(\{Np\})_N$ is equidistributed for irrational $p$, and
	$h_N=1-\{Np\}$; equidistribution is preserved.  Since $\int_0^1B_j(h)\,dh=0$ for every
	$j\ge1$, the lattice part of \eqref{eq:dm-structure} averages to zero.

	(ii) With $p=a/b$ and $\gcd(a,b)=1$, $Na\bmod b$ runs over all residues as $N$ runs over a
	period, and $h_N=\bigl((-Na)\bmod b\bigr)/b$; so each $j/b$ occurs once.  The mean of
	$B_j(h_N)$ over a period is $\frac1b\sum_{i=0}^{b-1}B_j(i/b)=b^{-j}B_j$, which is exactly
	\eqref{eq:mult-bernoulli}.
\end{proof}

\begin{example}
	Take $m=1$.  The smooth part of $d_1$ is $\delta_1=\tfrac1{12}$, so for irrational $p$ the
	first correction oscillates about $\tfrac1{12N}$.  For $p=a/b$ the mean of $d_1$ over a
	period is
	\[
		\frac1{12}-\frac{B_2}{2b^2pq}=\frac1{12}-\frac1{12\,b^2pq}
		=\frac{1}{12}\Bigl(1-\frac{1}{b^2pq}\Bigr).
	\]
	For $p=\tfrac12$ (so $b=2$, $pq=\tfrac14$) this vanishes, in agreement with
	Example~\ref{ex:half}, where $d_1=\mp\tfrac14$ has mean zero.  For $p=\tfrac13$ ($b=3$,
	$pq=\tfrac29$) the mean is $\tfrac1{12}(1-\tfrac12)=\tfrac1{24}$.
\end{example}

	Thus the same Bernoulli polynomials that carry the lattice oscillation also determine its mean:
	over a rational period, \(B_j(h)\) is replaced by \(b^{-j}B_j\).  The smooth part
	$\delta_m(p)$ is what a non-lattice calculation would produce; the lattice part is the correction
	that the kink of $|x-Np|$, sitting off the lattice, adds to it.

% =====================================================================
\section{Numerical illustration}\label{sec:numerics}

	We measure the accuracy by the number of exact decimal digits,
\[
	\mathrm{EDD}(N)=-\log_{10}\Bigl|1-\frac{\text{approximation}}{\E|X-Np|}\Bigr|,
\]
	the exact value being computed from \eqref{eq:demoivre}.  Column $(k)$ uses the leading term
	$\sqrt{2Npq/\pi}$ together with the first $k$ correction terms of \eqref{eq:mad-mult}.

\begin{table}[htbp]
\centering
\begin{tabular}{|r|c|c|rrrrrr|}
\hline
$N$ & $p$ & $h_N$ & $(0)$ & $(1)$ & $(2)$ & $(3)$ & $(4)$ & $(5)$\\
\hline
  50 & $2/3$ & $2/3$ & 2.4 & 4.6 & 6.6 & 7.9 & 9.5 & 10.6\\
 100 & $2/3$ & $1/3$ & 2.7 & 5.7 & 7.3 & 9.2 & 10.9 & 12.4\\
 500 & $2/3$ & $2/3$ & 3.4 & 6.6 & 9.6 & 11.9 & 14.6 & 16.6\\
1000 & $2/3$ & $1/3$ & 3.7 & 7.7 & 10.3 & 13.2 & 15.9 & 18.4\\
\hline
 100 & $0.5$ & $0$ & 2.6 & 5.5 & 7.4 & 10.0 & 11.3 & 13.8\\
1000 & $0.5$ & $0$ & 3.6 & 7.5 & 10.4 & 14.0 & 16.3 & 19.9\\
\hline
  50 & $1/\sqrt2$ & $0.645$ & 2.3 & 4.5 & 6.3 & 7.7 & 9.2 & 10.3\\
 100 & $1/\sqrt2$ & $0.289$ & 2.8 & 5.3 & 7.4 & 8.8 & 10.9 & 12.0\\
 500 & $1/\sqrt2$ & $0.447$ & 3.3 & 7.2 & 9.1 & 12.4 & 13.9 & 16.8\\
1000 & $1/\sqrt2$ & $0.893$ & 4.1 & 7.2 & 10.1 & 13.0 & 15.6 & 18.2\\
\hline
 100 & $0.1$ & $0$ & 2.1 & 4.4 & 5.6 & 7.5 & 8.1 & 9.9\\
 500 & $0.1$ & $0$ & 2.8 & 5.8 & 7.7 & 10.4 & 11.6 & 14.3\\
1000 & $0.1$ & $0$ & 3.1 & 6.4 & 8.6 & 11.6 & 13.1 & 16.1\\
\hline
\end{tabular}
\caption{Exact decimal digits in the expansion \eqref{eq:mad-mult} of $\E|X-Np|$.}
\label{tab:edd}
\end{table}

	Two features of Table~\ref{tab:edd} are worth naming.  First, each additional term gains
	roughly $\log_{10}N$ digits, which is the signature of a genuine expansion in integer powers of
	$N^{-1}$; there is no loss of half a power anywhere, in accordance with the last assertion of
	Theorem~\ref{thm:mad}.  Second, the row $p=2/3$ shows the periodic oscillation of $h_N$, while
	the irrational row $p=1/\sqrt2$ shows the non-periodic one; the numerical accuracy is stable in
	both cases because the error bounds are uniform in $h$.  The accuracy degrades slowly as $p$
	approaches the boundary of $(0,1)$ (the row $p=0.1$), which is visible in the factors
	$(pq)^{-m}$ in \eqref{eq:d1}--\eqref{eq:d3}.

% =====================================================================
\section{Concluding remarks}\label{sec:conclusion}

\begin{enumerate}
\item The result \eqref{eq:mad-log} is exact in the following sense: the expansion of
	$\E|X-Np|$ requires no probabilistic approximation at all.  De~Moivre's identity converts
	the expectation into one local mass, and the local mass is a gamma quotient.  Every
	subsequent step is Stirling's series and the Appell calculus of Bernoulli polynomials.  The
	Gaussian mean deviation $\sqrt{2Npq/\pi}$ appears at the end, as the leading term, rather
	than at the beginning, as an approximation.
\item The cancellation of the elementary part in Theorem~\ref{thm:mad} has a one-line mechanism.
	The prefactor $\nu=Np+h$ in \eqref{eq:demoivre} is the same $\nu$ that indexes the mass, so its
	logarithm $\log(1+h/(Np))$ contributes the series $\sum_k(-1)^{k+1}h^k/(k\,p^kN^k)$, and this is
	the exact negative of the non-Bernoulli residue $(-1)^kh^k/(k\,p^k)$ that the shift identity
	$B_{m}(h+1)=B_m(h)+mh^{m-1}$ leaves in the local mass \eqref{eq:Ak}; the two cancel term by term.
	It would still be worth knowing whether this is an instance of a general principle for Appell
	sequences --- a prefactor whose logarithm annihilates the difference-residue of the shift.
\item For $n\ge3$ bins the corresponding quantity --- the mean of the total-variation type
	statistic $\sum_j|X_j-Np_j|$ --- has no such closed form, and its expansion is a lattice
	Edgeworth expansion whose oscillating parts are Bernoulli polynomials evaluated at the
	fractional parts $\{Np_j\}$.  The present note is thus the exactly solvable end of that
	story, and it suggests which features of the general answer are structural (the Bernoulli
	polynomials, the parity of the weights) and which are artefacts of the approximation.  This
	last paragraph is motivation, not a claim proved here: the $n\ge3$ statements belong to a
	separate study, and everything asserted in the present note is contained in
	Theorems~\ref{thm:local}, \ref{thm:mad} and \ref{thm:envelope}, and
	Propositions~\ref{prop:robbins} and \ref{prop:cesaro}.
\end{enumerate}

% =====================================================================

\end{document}